\newtheorem{theorem}{Theorem}[section]
\newtheorem{lemma}[theorem]{Lemma}
\newtheorem{proposition}{Proposition}
 \numberwithin{equation}{section}
\newtheorem{remark}{Remark}
\newcommand{\keywords}
\def\bc{\begin{center}}       \def\ec{\end{center}}
\def\ba{\begin{array}}        \def\ea{\end{array}}
\def\be{\begin{equation}}     \def\ee{\end{equation}}
\def\bea{\begin{eqnarray}}    \def\eea{\end{eqnarray}}
\def\beaa{\begin{eqnarray*}}  \def\eeaa{\end{eqnarray*}}
\def\mathbb{\Bbb}
\begin{document}

\title{\bf Global well-posedness of advective Lotka–Volterra competition systems with nonlinear diffusion\footnote{published in Proc. Roy. Soc. Edinburgh Sect. A, 2019,
https://doi.org/10.1017/prm.2019.10}}
\author{Qi Wang\thanks{{\tt qwang@swufe.edu.cn}, corresponding author.  Department of Mathematics, Southwestern University of Finance and Economics, Chengdu, Sichuan 611130, China.  QW is supported by NSF-China (Grant 11501460) and the Fundamental Research Funds for the Central Universities, China (JBK1805001)},
Jingyue Yang \thanks{{\tt yjy@2011.swufe.edu.cn}.  Department of Mathematics, Southwestern University of Finance and Economics, Chengdu, Sichuan 611130, China},
Feng Yu  \thanks{{\tt yfeng@knights.ucf.edu}.  Department of Mathematics,  University of Central Florida, Orlando, Florida 32816, USA}
}

\date{}
\maketitle

\abstract
This paper investigates the global well--posedness of a class of reaction--advection--diffusion models with nonlinear diffusion and Lotka--Volterra dynamics.  We prove the existence and uniform boundedness of the global--in--time solutions to the fully parabolic systems under certain growth conditions on the diffusion and sensitivity functions.  Global existence and uniform boundedness of the corresponding parabolic--elliptic system are also obtained.  Our results suggest that attraction (positive taxis) inhibits blowups in Lotka--Volterra competition systems.

\textbf{Keywords: Lotka--Volterra competition system, nonlinear diffusion, global existence, boundedness}

\section{Introduction}\label{section1}
This paper is concerned with the global existence and boundedness of $(u,v)=(u(x,t),v(x,t))$ to reaction--advection--diffusion systems of the following form
\begin{equation}\label{11}
\left\{
\begin{array}{ll}
u_t=\nabla \cdot (D_1(u) \nabla u+\chi \phi(u) \nabla v)+(a_1-b_1u^{\alpha}-c_1v)u,&x \in \Omega,t>0, \\
v_t=D_2\Delta v+(a_2-b_2u-c_2v)v,&x \in \Omega,t>0, \\
\frac{\partial u}{\partial \textbf{n}}=\frac{\partial v}{\partial \textbf{n}}=0,&x\in\partial \Omega,t>0,\\
u(x,0)=u_0(x)\geq 0,v(x,0)=v_0(x)\geq 0, &x\in \Omega.
\end{array}
\right.
\end{equation}
Here $\Omega$ is a bounded domain in $\mathbb{R}^N, N\geq 1$ with a piece-wise smooth boundary $\partial \Omega$ endowed with unit outer normal $\textbf{n}$.  $a_i$, $b_i$, $c_i$, $i=1,2$, $D_2$ and $\chi$ are positive constants, while $D_1$ and $\phi$ are $C^2$--smooth functions of $u$.  Moreover, we assume there exist some positive constants $M_i$, $m_i$, $i=1,2$, such that
\begin{equation}\label{12}
D_1(u)\geq M_1(1+u)^{m_1}, \forall u\geq 0,
\end{equation}
and
\begin{equation}\label{13}
0\leq \phi(u)\leq M_2u^{m_2}, \forall u\geq 0.
\end{equation}

System (\ref{11}) can be used to model the evolution of population distributions of two competing species subject to Lotka--Volterra dynamics.  Consider two species with population densities being $u(x,t)$ and $v(x,t)$ at location $x\in\Omega$ and time $t\geq0$.  Diffusion describes the random dispersal of the species as an anti--crowding mechanism and it is taken to be spatially local and against the direction of population gradient of the focal species.  Moreover such anti--crowding motion changes with respect to the variation of the population density, and therefore we assume that $D_1$ is a function of $u$, while $D_2$ is chosen to be a positive constant for the simplicity of our analysis.  The advection $\chi\phi(u)\nabla v$, or the cross--diffusion, accounts for the directed dispersal due to the population pressure from competing species $v$, and it is along with the direction of population gradient $\nabla v$.  In (\ref{11}) the function $\phi(u)$ interprets variation of the advection intensity with respect to population density $u$.   The population kinetics are assumed to be of Lotka--Volterra type.

The initial step to understand the spatial--temporal dynamics of (\ref{11}) is to study its global well--posedness.  It is the goal of this paper to study the effects of growth rates $m_i$ and decay rate $\alpha$, although far from being well understood, on the global existence and uniform boundedness of this system.  Our first main result reads as follows.
\begin{theorem}\label{theorem11}
Suppose that $\Omega\subset \mathbb{R}^N, N\geq2$, is a bounded domain.  Assume that the smooth functions $D_1(u)$ and $\phi(u)$ satisfy (\ref{12}) and (\ref{13}) respectively with
\begin{align}\label{14}
m_2-m_1<
    \begin{cases}
       \frac{2}{N},&\text{~if~} 0<\alpha < 1,\\
       \frac{3N+2}{N(N+2)},&\text{~if~} \alpha\geq 1,\\
    \end{cases}
\end{align}
 then for any nonnegative $(u_0,v_0)\in C(\bar{\Omega}) \times W^{1,\infty}(\Omega)$, there exists at least one couple $(u,v)$ of nonnegative bounded functions each belonging to $C^0(\bar{\Omega}\times[0,\infty))\cap C^{2,1}(\bar{\Omega}\times(0,\infty))$ which solves (\ref{11}) classically.  Moreover if $(u_0,v_0)\in W^{k,p}(\Omega)\times W^{k,p}(\Omega)$ for some $k>1$ and $p>N$, the bounded solution above is unique.
\end{theorem}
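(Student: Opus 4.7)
The plan is to extend the local-in-time classical solution provided by Amann's quasilinear parabolic theory to a global one by establishing a uniform $L^\infty$ bound on $u$; by the standard extensibility criterion (finite-time blow-up can only happen through $\|u(\cdot,t)\|_{L^\infty(\Omega)}$), this suffices. For $v$, the parabolic maximum principle applied to the second equation of \eqref{11} immediately yields $\|v(\cdot,t)\|_{L^\infty(\Omega)}\le K_v:=\max\{\|v_0\|_{L^\infty(\Omega)},\,a_2/c_2\}$ throughout the maximal existence interval $[0,T_{\max})$, since the reaction is of logistic type. Integrating the $u$-equation against $1$ over $\Omega$ and using $\alpha>0$ together with H\"older's inequality on the damping term $-b_1\int_\Omega u^{\alpha+1}$ gives, via a standard ODE comparison, a uniform $L^1$ bound $\|u(\cdot,t)\|_{L^1(\Omega)}\le K_1$. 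Hence the task reduces to upgrading from $L^1$ to $L^\infty$.

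The heart of the proof is to establish uniform $L^p$ bounds on $u$ for $p$ arbitrarily large. Testing the $u$-equation with $u^{p-1}$, the hypothesis \eqref{12} produces a dissipative contribution of order $\|\nabla u^{(p+m_1)/2}\|_{L^2(\Omega)}^2$, the logistic reaction yields $-b_1\int_\Omega u^{p+\alpha}$ (decisive when $\alpha\ge 1$), and, via \eqref{13} and Young's inequality, the cross-diffusion is bounded by
\[
(p-1)\chi M_2\int_\Omega u^{p-2+m_2}|\nabla u|\,|\nabla v|\ \le\ \tfrac12\,(\text{diffusive term})\ +\ C\int_\Omega u^{p+2m_2-m_1}|\nabla v|^2.
\]
To close the estimate I need an $L^q$-bound on $\nabla v$ in terms of $\|u\|_{L^r}$, which I would obtain from parabolic $L^q$-regularity applied to $v_t-D_2\Delta v=(a_2-b_2u-c_2v)v$, exploiting that $v$ is already bounded and that the source is linear in $u$.

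Gagliardo--Nirenberg interpolation then converts the bad product $\int_\Omega u^{p+2m_2-m_1}|\nabla v|^2$ into a small fraction of the diffusive and damping terms plus a bounded remainder, provided the exponent arithmetic is admissible. Working out these constraints produces precisely the dichotomy in \eqref{14}: when $0<\alpha<1$ the contribution $\int u^{p+\alpha}$ is too weak to be of use and one must rely solely on the diffusive term, forcing $m_2-m_1<2/N$; when $\alpha\ge 1$ the additional dissipation widens the admissible interpolation range, relaxing the condition to $m_2-m_1<(3N+2)/(N(N+2))$. Once uniform $L^p$ bounds hold for one $p$ large enough (depending on $N$, $m_1$, $m_2$ and $\alpha$), a Moser--Alikakos iteration on the $u$-equation, driven by the growth of $D_1$, yields $\|u(\cdot,t)\|_{L^\infty(\Omega)}\le K_\infty$, contradicting blow-up, and standard parabolic H\"older/Schauder theory then upgrades the solution to the classical regularity claimed.

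For uniqueness under the stronger regularity $(u_0,v_0)\in W^{k,p}(\Omega)\times W^{k,p}(\Omega)$ with $k>1$ and $p>N$, I would use that such data propagate $\nabla u,\nabla v\in L^\infty$ on short time intervals, which makes $D_1$, $\phi$ and the reaction coefficients Lipschitz along the solutions; a standard energy estimate for the difference of two solutions combined with Gronwall's inequality then closes the argument. The main obstacle is the interpolation step used to absorb $\int_\Omega u^{p+2m_2-m_1}|\nabla v|^2$: the sharp thresholds in \eqref{14} emerge only after coordinating the choice of $p$, the Young exponents, the $L^q$ gradient bound on $v$, and the Gagliardo--Nirenberg exponents so that the critical scalings match; in particular, the split into the $\alpha<1$ and $\alpha\ge 1$ cases originates entirely from whether or not the $-b_1\int u^{p+\alpha}$ term can participate in absorbing the bad power.
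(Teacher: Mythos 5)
Your high-level outline (local existence via Amann, $L^\infty$ bound on $v$ by the maximum principle, $L^1$ bound on $u$ from integrating the equation, $L^p$-to-$L^\infty$ upgrade by Moser--Alikakos) agrees with the paper.  However, the middle of the argument -- obtaining the uniform $L^p$ bounds -- has two genuine gaps, one structural and one concerning the role of~$\alpha$.

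\textbf{Closing the $L^p$ estimate is circular as you set it up.}  You propose to control the cross-diffusion term $\int_\Omega u^{p+2m_2-m_1}|\nabla v|^2$ by first obtaining an $L^q$ bound on $\nabla v$ from parabolic regularity applied to the $v$-equation, ``in terms of $\|u\|_{L^r}$.''  But the $L^q$-bound on $\nabla v$ for $q>N/(N-1)$ coming from \eqref{25} requires $u\in L^r$ with $r>1$, which is precisely what you are trying to establish.  You would need either a multi-step bootstrap (whose admissibility at each stage must be checked and whose terminal threshold would have to be worked out), or else the device the paper actually uses: a \emph{coupled} Lyapunov-type functional $y(t)=\tfrac1p\int_\Omega u^p+\tfrac1{2q}\int_\Omega|\nabla v|^{2q}$, with both evolution equations tested simultaneously, so that the dissipation $\int_\Omega|\nabla u^{(p+m_1)/2}|^2$ from the $u$-equation and $\int_\Omega|\nabla|\nabla v|^q|^2$ from the $v$-equation can jointly absorb the cross terms through Gagliardo--Nirenberg, anchored only on the low-regularity data $\|u\|_{L^1}$ and $\|\nabla v\|_{L^s}$ that are available \emph{a priori}.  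Your sketch does not construct such a coupled quantity and does not indicate how the circularity is broken.

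\textbf{The explanation of the dichotomy in \eqref{14} is incorrect.}  You claim that the split between $0<\alpha<1$ and $\alpha\ge1$ ``originates entirely from whether or not the $-b_1\int u^{p+\alpha}$ term can participate in absorbing the bad power.''  In the paper's proof of Lemma~3.1 (which underlies Theorem~\ref{theorem11}), the term $-\tfrac{b_1}{2}\int_\Omega u^{p+\alpha}$ on the left of \eqref{35} is simply a nonnegative contribution that is eventually discarded; it does not absorb the cross-diffusion term.  What $\alpha\ge1$ actually buys, via Lemma~2.3, is a uniform \emph{a priori} $L^2$ bound on $\nabla v$ (as opposed to only $L^s$ with $s<N/(N-1)$ when $\alpha<1$), and it is this improvement in the value of $s$ used as anchor in the Gagliardo--Nirenberg interpolation that relaxes the threshold from $m_2-m_1<2/N$ to $m_2-m_1<(3N+2)/(N(N+2))$.  (The mechanism you describe -- the damping term $-b_1\int u^{p+\alpha}$ absorbing the bad power produced by Young's inequality -- is indeed used, but in the proof of Theorem~\ref{theorem12} via Lemma~3.2, where the alternative condition \eqref{15} arises.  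You have effectively conflated the two arguments.)

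In short: the skeleton is right, but the heart of the argument (how the coupled dissipation is organized and how the interpolation closes) is missing, and the stated origin of the threshold \eqref{14} does not match what the proof requires.
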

By a different approach we are also able to prove the following result under a condition different from (\ref{14}).

\begin{theorem}\label{theorem12}
Suppose that all conditions in Theorem \ref{theorem11} hold except that (\ref{14}) is replaced by
  \begin{align}\label{15}
2m_2-m_1<
    \begin{cases}
      \max\{\alpha, m_1\}+\frac{2}{N},&\text{~if~} 0<\alpha < 1,\\
      \max\{\alpha, m_1\}+\frac{4}{N+2},&\text{~if~} \alpha\geq 1,\\
    \end{cases}
  \end{align}
then all the conclusions in Theorem \ref{theorem11} hold, i.e., the solution to (\ref{11}) is unique, global and uniformly bounded in time.
\end{theorem}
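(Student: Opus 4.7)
The plan is to follow the same high-level road map as for Theorem \ref{theorem11} --- local existence with an extensibility criterion, a uniform $L^\infty$ bound on $v$, a uniform $L^p$ bound on $u$ for every finite $p$, and then a Moser--Alikakos iteration up to $L^\infty$ --- but to obtain the $L^p$ bound through a genuinely different energy estimate. The new ingredient is the logistic dissipation $-b_1u^{1+\alpha}$ in the reaction for $u$, which is precisely why the exponent $\alpha$ appears in (\ref{15}) through $\max\{\alpha,m_1\}$ but was absent from (\ref{14}).

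Steps~1 and~2 carry over verbatim from the proof of Theorem \ref{theorem11}: standard parabolic fixed-point theory gives a local-in-time classical solution on a maximal interval $[0,T_{\max})$ together with the criterion that $T_{\max}=\infty$ as soon as $\|u(\cdot,t)\|_{L^\infty(\Omega)}$ is a-priori bounded, and a direct comparison on the $v$-equation yields $\|v(\cdot,t)\|_{L^\infty(\Omega)}\le K:=\max\{\|v_0\|_{L^\infty(\Omega)},\,a_2/c_2\}$ for all $t\in[0,T_{\max})$, because $u$ enters the $v$-equation only through the non-positive coupling $-b_2uv$.

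For the $L^p$ estimate I would test the $u$-equation with $u^{p-1}$ for $p$ large, integrate by parts, use (\ref{12})--(\ref{13}), and split the advective term by weighted Young's inequality to arrive at
\[
\frac{1}{p}\frac{d}{dt}\int_\Omega u^p+\frac{M_1(p-1)}{2}\int_\Omega u^{p-2}(1+u)^{m_1}|\nabla u|^2+b_1\int_\Omega u^{p+\alpha}\le C_1\int_\Omega u^{p-2+2m_2-m_1}|\nabla v|^2+a_1\int_\Omega u^p,
\]
with $C_1=C_1(p,\chi,M_i,m_i)$. The offending last term would be handled in three moves: (i) factor it by H\"older as $\|u\|_{L^{qs}}^{q}\|\nabla v\|_{L^{2s'}}^{2}$ for dual exponents $s,s'$; (ii) translate $\|\nabla v\|_{L^{2s'}}$ into a Lebesgue norm of $u$ via parabolic regularity for the $v$-equation with source $(a_2-b_2u-c_2v)v$, which is pointwise dominated by $C(1+u)$ thanks to Step~2; (iii) interpolate the resulting Lebesgue norms of $u$ by Gagliardo--Nirenberg between $\|u\|_{L^p}$ and whichever of the two dissipations on the left-hand side is the stronger. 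Using the diffusive dissipation yields an effective $L^{p+m_1}$ control through $\|\nabla u^{(p+m_1)/2}\|_{L^2}^{2}$ and the Sobolev embedding $H^1(\Omega)\hookrightarrow L^{2N/(N-2)}(\Omega)$, while using the reactive dissipation yields an $L^{p+\alpha}$ control pointwise in time; choosing whichever of $\alpha$ and $m_1$ is larger produces the $\max\{\alpha,m_1\}$ in (\ref{15}). The two gain factors $2/N$ and $4/(N+2)$ respectively reflect the elliptic Sobolev interpolation used in each time slice (available when $0<\alpha<1$) and the stronger parabolic embedding in the space--time cylinder (available when $\alpha\ge 1$, where the integrability of the reactive dissipation suffices). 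In either regime (\ref{15}) is precisely the sub-criticality condition ensuring that a final Young absorption yields an autonomous ODI $\tfrac{d}{dt}y+\sigma y^{1+\nu}\le C$ with $y:=\int_\Omega u^p$ and $\sigma,\nu>0$, whence $\sup_{t\ge 0}\|u(\cdot,t)\|_{L^p(\Omega)}<\infty$.

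The bootstrap from a uniform $L^p$ bound for every finite $p$ to a uniform $L^\infty$ bound is the Moser--Alikakos iteration already deployed for Theorem \ref{theorem11}, and uniqueness under the extra regularity $(u_0,v_0)\in W^{k,p}(\Omega)\times W^{k,p}(\Omega)$, $p>N$, will follow from a standard contraction argument in $L^\infty(0,T;W^{1,p}(\Omega))$. The hard step is clearly the core $L^p$ estimate: managing the two distinct dissipations simultaneously, choosing the correct H\"older exponents $s,s'$ and the right parabolic-regularity exponent for $\nabla v$, and verifying that the resulting scaling inequality reproduces exactly the thresholds $2/N$ and $4/(N+2)$ in (\ref{15}) with no slack.
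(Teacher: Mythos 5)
Your high-level road map (local existence, uniform $L^\infty$ bound on $v$, uniform $L^p$ bound on $u$, Moser--Alikakos) matches the paper, and you correctly identify that the dissipation $-b_1 u^{p+\alpha}$ is what makes $\max\{\alpha,m_1\}$ appear in (\ref{15}). But the core $L^p$ step in your proposal has a genuine gap, and it is not the step the paper takes.

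Your move~(ii) --- translate $\|\nabla v\|_{L^{2s'}}$ into a Lebesgue norm of $u$ via parabolic regularity for the $v$-equation --- does not close. The relevant estimate (Lemma \ref{lemma22}, inequality (\ref{25})) gives $\|\nabla v(\cdot,t)\|_{L^q}\le C\bigl(1+\sup_{s\le t}\|u(\cdot,s)\|_{L^p}\bigr)$; it is nonlocal in time and involves exactly the quantity you are trying to bound, so inserting it back into the $\tfrac{d}{dt}\int u^p$ inequality produces a circular estimate rather than an autonomous ODI. This strategy does work in the parabolic--elliptic setting (the paper uses precisely this conversion, $\|\nabla v\|_{L^\gamma}\lesssim\|u\|_{L^{\gamma/2}}+1$, in the proof of Theorem \ref{theorem41}), but in the fully parabolic case one must instead track a \emph{coupled} functional. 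The paper runs the $\tfrac{d}{dt}\int|\nabla v|^{2q}$ estimate in parallel (inequalities (\ref{36})--(\ref{322})), forms $y(t)=\tfrac1p\int u^p+\tfrac1{2q}\int|\nabla v|^{2q}$, and absorbs everything into the two dissipations $\int|\nabla u^{(p+m_1)/2}|^2$ and $\int\bigl|\nabla|\nabla v|^{q}\bigr|^2$. That coupling is the missing ingredient in your proposal. Concretely, in Lemma \ref{lemma32} the paper applies Young directly to $I_{31}$ and $I_{32}$ to split off $\tfrac{b_1}{4}\int u^{p+\max\{\alpha,m_1\}}$ (absorbed by the logistic dissipation or, via (\ref{342}) and Young, by the gradient dissipation), and the remaining $\int|\nabla v|^{\theta_i}$ terms are controlled by Gagliardo--Nirenberg against $\int\bigl|\nabla|\nabla v|^q\bigr|^2$, not by regularity theory for $v$.

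Your explanation of the two thresholds $2/N$ and $4/(N+2)$ as coming from ``elliptic Sobolev interpolation'' vs.\ a ``stronger parabolic embedding in the space--time cylinder'' is also off the mark. Both cases use the same time-sliced Gagliardo--Nirenberg machinery; the dichotomy is purely that Lemma \ref{lemma22} gives $\|\nabla v\|_{L^s}$ bounded only for $s<\tfrac{N}{N-1}$ in general, whereas Lemma 2.3 upgrades this to $s=2$ when $\alpha\ge1$ (using the extra dissipation from the logistic term). Plugging $s\uparrow\tfrac{N}{N-1}$ vs.\ $s=2$ into the same interpolation exponent is what produces $2/N$ vs.\ $4/(N+2)$.
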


In the absence of advection, for example when $D_1(u)\equiv D_1$, $\chi=0$ and $\alpha=1$, (\ref{11}) reduces to the following diffusive Lotka--Volterra competition model
\begin{equation}\label{16}
\left\{
\begin{array}{ll}
u_t=D_1\Delta u+(a_1-b_1u-c_1v)u,&x \in \Omega,t>0, \\
v_t=D_2\Delta v+(a_2-b_2u-c_2v)v,&x \in \Omega,t>0, \\
\frac{\partial u}{\partial \textbf{n}}=\frac{\partial v}{\partial \textbf{n}}=0,&x\in\partial \Omega,t>0,\\
u(x,0)=u_0(x)\geq 0,v(x,0)=v_0(x)\geq 0, &x\in \Omega.
\end{array}
\right.
\end{equation}
Thanks to the standard parabolic maximum principles, it is quite obvious that the solution $(u,v)$ to (\ref{16}) exists globally and is uniformly bounded \cite{CHS,CS}.  It is also known that the positive homogeneous solutions $(\bar u,\bar v)$ is the global (exponential) attractor of (\ref{16}) in weak competition case $\frac{b_1}{b_2}>\frac{a_1}{a_2}>\frac{c_1}{c_2}$ \cite{CHS,DR}, and (\ref{16}) does not admit nonconstant stable steady states when $\Omega$ is convex \cite{KiW} or one of the diffusion rates $D_i$ is large \cite{DR,LN}.  On the other hand, the system admits nonconstant positive steady states when $\Omega$ is non--convex (e.g. of dumb--bell shaped) in the strong competition case $\frac{b_1}{b_2}<\frac{a_1}{a_2}<\frac{c_1}{c_2}$, with properly chosen (small) diffusion rates \cite{MM,MEF,MK,MNTT}.  See \cite{LN,LN2,WGY} for further discussions on (\ref{16}).

Though it is not entirely unrealistic to assume that mutually interacting species disperse over the habitat purely randomly, from the viewpoint of mathematical modeling, it is interesting to incorporate advection or cross--diffusion into system (\ref{16}), which accounts for the dispersal pressure due to population gradient of the intra-- and/or inter--species.  On the other hand, one of the most interesting phenomena in ecological evolution is the well--observed segregation of competing species, i.e., some regions of the habitat are dominated by one species and the rest by the other; however, in most cases system (\ref{16}) inhibits the formation of nontrivial patterns such as boundary spikes, transition layers etc., which can be used to model the aforementioned segregation.  For this purpose, the following model with advection was proposed and studied in \cite{WGY}
\begin{equation}\label{17}
\left\{
\begin{array}{ll}
u_t=\nabla \cdot (D_1 \nabla u+\chi u \nabla v)+(a_1-b_1u-c_1v)u,&x \in \Omega,t>0, \\
v_t=D_2\Delta v+(a_2-b_2u-c_2v)v,&x \in \Omega,t>0, \\
\frac{\partial u}{\partial \textbf{n}}=\frac{\partial v}{\partial \textbf{n}}=0,&x\in\partial \Omega,t>0,\\
u(x,0)=u_0(x)\geq 0,v(x,0)=v_0(x)\geq 0, &x\in \Omega,
\end{array}
\right.
\end{equation}
where all the parameters are positive constants.  Global existence and boundedness of the fully parabolic system are obtained in \cite{WGY} when $\Omega$ is one--dimensional and of its parabolic--elliptic counterpart when $\Omega$ is multi--dimensional and $\frac{\chi}{D_2}$ is small.  Steady state bifurcation is performed to establish the existence and stability of its nonconstant stationary solutions.  Moreover, it is shown that (\ref{17}) admits transition--layer steady states when $\chi$ and $1/D_2$ are sufficiently large.  These nonconstant steady states can be used to model the aforementioned segregation phenomenon.  Recently it is proved in \cite{SSW} that extinction through competition does not occur in (\ref{17}) out of small initial data in the weak competition case.  Global existence and nonconstant steady states of (\ref{11}) with sublinear sensitivity are obtained in \cite{WZ} when $\Omega$ is a multi--dimensional bounded domain.

In this work, we extend (\ref{17}) to the more realistic and general model (\ref{11}) with nonconstant diffusion by assuming that the random dispersal rate of species depends non-linearly on the population density of the focal species $u$.  Moreover, the density--dependent sensitivity means that the advective velocity of species $u$ varies with different population density.  By nonlinear diffusion and sensitivity, we are able to use (\ref{11}) to describe population--induced dispersal in ecological applications.  Here for the simplicity of our analysis and to focus the interplay between $m_i$ and $\alpha$ on our results, we always assume that $D_2$ is a positive constant.

We would like to mention that (\ref{11}) serves as a prototype for reaction--diffusion systems with cross--diffusion modeling population pressures created by the competitions.  For example, Shigesada, Kawasaki and Teramoto \cite{SKT} proposed the following system in 1979 to model the segregation phenomenon of two competing species
\begin{equation}\label{18}
\left\{
\begin{array}{ll}
u_t=\Delta[(d_1+\rho_{11}u+\rho_{12}v)u]+(a_1-b_1u-c_1v)u, &x \in \Omega,~t>0,     \\
v_t=\Delta[(d_2+\rho_{21}u+\rho_{22}v)v]+(a_2-b_2u-c_2v)v,& x \in \Omega,~t>0, \\
\frac{\partial u}{\partial \textbf{n}}=\frac{\partial v}{\partial \textbf{n}}=0,& x \in \partial \Omega,~t>0,\\
u(x,0)=u_0(x) \geq 0,~ v(x,0)=v_0(x) \geq 0,& x\in \Omega,
\end{array}
\right.
\end{equation}
which takes into consideration both \textit{self--diffusions} $\rho_{11},\rho_{22}$ and \textit{cross--diffusions} $\rho_{12},\rho_{21}$.  (\ref{18}) has received adequate attention over the past few decades since its appearance and a great deal of effort has been devoted to studying its global well--posedness \cite{CLY,CLY2,LeNN,LNW,Shim,Tuoc,TuP,Yamada} and positive steady states \cite{KW,Kuto,KT,LN,LN2,LNY,LNY2,NWX,Wq,WX}.  To compare (\ref{18}) with (\ref{11}), we assume that $\rho_{21}=\rho_{22}=0$ and rewrite it into the following form
\begin{equation}\label{19}
\left\{
\begin{array}{ll}
u_t=\nabla \cdot [(d_1+2\rho_{11}u+\rho_{12}v) \nabla u+\rho_{12}u\nabla v]+(a_1-b_1u-c_1v)u, &x \in \Omega,~t>0,     \\
v_t= d_2\Delta v+(a_2-b_2u-c_2v)v,& x \in \Omega,~t>0, \\
\frac{\partial u}{\partial \textbf{n}}=\frac{\partial v}{\partial \textbf{n}}=0,& x \in \partial \Omega,~t>0,\\
u(x,0)=u_0(x) \geq 0,~ v(x,0)=v_0(x) \geq 0,& x\in \Omega.
\end{array}
\right.
\end{equation}
It is proved in \cite{LNW} that when space dimension $N=2$, if $u_0,v_0\in W^{k,p}$ for some $k>N$, then (\ref{19}) has a unique global solution which solves the system classically.  As can be easily seen, this global existence result can be rediscovered by both Theorem \ref{11} and Theorem \ref{21} since (\ref{19}) is a special case of (\ref{11}) with $m_2=m_1=\alpha=1$, for which (\ref{14}) and (\ref{15}) obviously hold.  Moreover our results show that the global solutions are uniformly bounded in time which was not available in \cite{LNW}.

Another example is the following model proposed in \cite{CCL,CCLX} to study the dispersal strategies leading to ideal free distribution of populations in evolutionary ecology
\begin{equation}\label{110}
\left\{
\begin{array}{ll}
u_t=\nabla \cdot (d_1\nabla u-\chi u \nabla (m-u-v))+(m-u-v)u,&x \in \Omega,t>0, \\
v_t=d_2\Delta v+r(m-u-v)v,&x \in \Omega,t>0, \\
\frac{\partial u}{\partial \textbf{n}}=\frac{\partial v}{\partial \textbf{n}}=0,&x\in\partial \Omega,t>0,\\
u(x,0)=u_0(x)\geq 0,v(x,0)=v_0(x)\geq 0, &x\in \Omega,
\end{array}
\right.
\end{equation}
where $d_1$, $d_2$ and $\chi$ are positive constants.  $m=m(x)\in C^{2+\gamma}(\bar \Omega)$ and $m(x)>0$ in $\Omega$.  Lou \emph{et al.} \cite{LTW} studied the bounded classical global solutions to the following system over multi--dimensional domain $N\geq1$. We note that (\ref{110}) can be rewritten as
\[
\left\{
\begin{array}{ll}
u_t=\nabla \cdot ((d_1+\chi u)\nabla u+\chi u \nabla v-\chi u\nabla m)+(m-u-v)u,&x \in \Omega,t>0, \\
v_t=d_2\Delta v+r(m-u-v)v,&x \in \Omega,t>0, \\
\frac{\partial u}{\partial \textbf{n}}=\frac{\partial v}{\partial \textbf{n}}=0,&x\in\partial \Omega,t>0,\\
u(x,0)=u_0(x)\geq 0,v(x,0)=v_0(x)\geq 0, &x\in \Omega,
\end{array}
\right.
\]
hence it is a special case of (\ref{11}) with $m_1=m_2=\alpha=1$ and the global well--posedness follows from Theorem \ref{theorem11} or Theorem \ref{theorem12}.  We refer to \cite{CC,CJun2,Cosner,DLMT,HNP,Jun,Le,LeN,LeN2,SK} and the references therein for works on global existence of cross--diffusion systems.

We would like to mention that (\ref{11}) is very similar to the nonlinear diffusion Keller--Segel models of chemotaxis, which describes the directed movements of cellular organisms in response to chemical stimulus.  In particular, the chemotaxis is attractive if the cells move towards high concentration of the chemical (e.g., sugar, nutrition) and chemotaxis is repulsive if the cells move against the chemical concentration (e.g., poison, hazardous materials).  It is also necessary to point out that the logistic growth in Lotka--Volterra dynamics, which inhibits solutions from blowing up within a finite or infinite time for purely diffusive models, might not be sufficient to prevent blowups when advection or chemotaxis is present.  For example, Le and Nguyen \cite{LN} gave an example of finite--time blowup solutions to a cross--diffusion system subject to Lotka--Volterra dynamics.  See \cite{Lan,Winkler3,Winkler4} for counterexamples for chemotaxis models with logistic growth, and \cite{BDD,ISY,TW2,WMZ,Wyf,ZL,ZMH} for the works on chemotaxis models with nonlinear diffusions.

The rest part of this paper is organized as follows.  In Section \ref{section2}, we present the existence and important some important properties of the local in time solution to (\ref{11}).  In Section \ref{section3}, we establish several \emph{a priori} estimates which are essential for the proof of Theorem \ref{theorem11} and Theorem \ref{theorem12}.  Finally in Section \ref{section4}, for parabolic--elliptic system of (\ref{11}) with repulsion, we prove its global existence and boundedness in Theorem \ref{theorem41} and Theorem \ref{theorem42} under milder conditions on $m_i$ and $\alpha$ than (\ref{14}) and (\ref{15}); moreover for parabolic--elliptic of system (\ref{11}) with attraction (i.e., change $\chi$ to $-\chi$), we prove in Theorem \ref{theorem43} that the solutions are global and bounded for as long as one of $m_1$, $m_2$ and $\alpha$ is nonnegative.  Our results indicate that, unlike Keller--Segel models in which chemo--repulsion is a smoothing process, population repulsion destabilizes the spatially homogeneous solution of Lotka--Volterra competition systems (e.g., see Proposition 1 in \cite{WGY}).

\section{Local existence and preliminary results}\label{section2}
The mathematical analysis of global well--posedness of (\ref{11}) is delicate since the maximum principle does not apply for the $u$ equation.  However, the local well--posedness follows easily from the fundamental theory developed by Amann \cite{Am} and the standard parabolic regularity theory.
\begin{proposition}\label{proposition21}
Let $\Omega$ be a bounded domain in $\mathbb R^N$, $N\geq1$.
Let $a_i, b_i, c_i, \alpha, D_2$ be positive and suppose that $D_1(u)$ and $\phi(u)$ are $C^2$ smooth functions and they satisfy (\ref{12}) and (\ref{13}) for positive constants $m_i$ and $M_i$, $i=1,2$.  Assume that for some $p>N$ and $k>1$, $(u_0,v_0)$ belongs to $(W^{k,p}(\Omega))^2$ and $u_0,v_0\geq,\not\equiv 0 $ in $\bar{\Omega}$.  Then there exist $T_{\max}\in(0,\infty]$ and a unique couple $(u,v)$ of nonnegative functions from $C^0(\bar{\Omega}\times[0,T_{\max}))\cap C^{2,1}(\bar{\Omega}\times(0,T_{\max}))$ solving (\ref{11}) classically in $\Omega\times(0,T_{\max})$.  Moreover $u(x,t)\geq 0$ and $v(x,t)\geq0$ in $\Omega\times(0,T_{\max})$ and the following dichotomy holds:
\begin{equation}\label{21}
  \text{either \quad} T_{\max}=\infty \text{\qquad or\qquad} T_{\max}<\infty \text{~with~} \limsup_{t\nearrow T_{\max}^-} \|u(\cdot,t)\|_{L^{\infty}(\Omega)}=\infty.
\end{equation}
\end{proposition}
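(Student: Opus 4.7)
The plan is to cast \eqref{11} into the abstract quasilinear parabolic framework of Amann, so that local existence, uniqueness and regularity follow from his general theory, and then to upgrade the abstract extension criterion to the concrete $L^\infty$ one stated in \eqref{21}. Set $\mathbf w=(u,v)^T$ and rewrite \eqref{11} in divergence form $\mathbf w_t=\nabla\!\cdot\!\bl(A(\mathbf w)\nabla\mathbf w\br)+F(\mathbf w)$ with the triangular diffusion matrix
\[
A(\mathbf w)=\begin{pmatrix} D_1(u) & \chi\phi(u)\\ 0 & D_2\end{pmatrix},
\qquad F(\mathbf w)=\begin{pmatrix}(a_1-b_1u^\al-c_1v)u\\ (a_2-b_2u-c_2v)v\end{pmatrix}.
\]
The eigenvalues of $A$ are $D_1(u)\ge M_1>0$ and $D_2>0$, so $A$ is normally elliptic in the sense of Amann, and the Neumann condition is a standard boundary operator for this setup. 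The regularity $D_1,\phi\in C^2$ and $F\in C^2$ together with $(u_0,v_0)\in (W^{\kappa,p})^2$ for $\kappa>1$ and $p>N$ allow us to invoke Amann's local existence theorem to obtain a maximal $T_{\max}>0$ and a unique classical solution $(u,v)\in C^0(\bar\Omega\times[0,T_{\max}))\cap C^{2,1}(\bar\Omega\times(0,T_{\max}))$.

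Next I would show nonnegativity by scalar maximum principles applied separately to each equation. The $v$-equation is a semilinear scalar parabolic equation with smooth coefficients and with $v\equiv 0$ a sub-solution; since $v_0\ge 0,\not\equiv 0$, the strong maximum principle gives $v\ge 0$ (and in fact $v>0$ for $t>0$). For $u$, note that the growth bound $0\le\phi(u)\le M_2u^{m_2}$ with $m_2>0$ forces $\phi(0)=0$; expanding the divergence,
\[
u_t=D_1(u)\Delta u+D_1'(u)|\nabla u|^2+\chi\phi'(u)\nabla u\!\cdot\!\nabla v+\chi\phi(u)\Delta v+(a_1-b_1u^\al-c_1v)u,
\]
so both the drift produced by $\nabla v$ and the zero-order source vanish identically at $u=0$. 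Thus $u\equiv 0$ is a sub-solution, and the comparison/maximum principle applied (say, to $u^-$) yields $u\ge 0$ throughout $\Omega\times(0,T_{\max})$.

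For the dichotomy \eqref{21} I would first observe that $v$ is automatically globally bounded: since $-b_2uv\le 0$, the $v$-equation gives $v_t\le D_2\Delta v+(a_2-c_2v)v$, and ODE comparison together with the Neumann condition yield $\|v(\cdot,t)\|_{L^\infty(\Omega)}\le\max\bl\{\|v_0\|_{L^\infty(\Omega)},\,a_2/c_2\br\}$ for all $t\in[0,T_{\max})$. Amann's abstract criterion states that if $T_{\max}<\infty$ then $\|(u,v)(\cdot,t)\|_{W^{1,q}(\Omega)}\to\infty$ for suitable $q>N$. The task is therefore to show that if, contrary to \eqref{21}, $\|u(\cdot,t)\|_{L^\infty(\Omega)}$ were to remain bounded on $[0,T_{\max})$ with $T_{\max}<\infty$, then no such blowup of Sobolev norms can occur. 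The bootstrap I would use is: boundedness of $u,v$ in $L^\infty$ and standard $L^p$--$L^q$ estimates applied to the $v$-equation give $\|\nabla v(\cdot,t)\|_{L^\infty(\Omega)}$ bounded on $[0,T_{\max})$; with the coefficients $D_1(u), \phi(u), \phi'(u)$ and the source $(a_1-b_1u^\al-c_1v)u$ thus uniformly bounded, the $u$-equation becomes a scalar uniformly parabolic equation with bounded data, and parabolic Hölder theory (Ladyzhenskaya--Solonnikov--Uraltseva) yields uniform $C^{\beta,\beta/2}$ and, by Schauder, uniform $C^{2+\beta,1+\beta/2}$ estimates up to $T_{\max}$. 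This contradicts the Amann blow-up criterion, establishing \eqref{21}. Uniqueness at the level of classical solutions is again part of Amann's output.

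The main obstacle in this program is the last step: translating Amann's abstract extension criterion, formulated in a fractional Sobolev space adapted to the nonlinear diffusion $A(\mathbf w)$, into the concrete $L^\infty$ condition on $u$ alone. This is where the triangular structure of $A$ and the a priori $L^\infty$-bound on $v$ are essential, since they reduce the coupled bootstrap to a scalar problem for $u$ with coefficients that depend only on quantities already known to be controlled once $\|u\|_{L^\infty}$ is.
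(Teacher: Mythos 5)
Your proposal is correct and follows the same route the paper takes: the paper states Proposition~\ref{proposition21} without an explicit proof, relying entirely on Amann's quasilinear theory \cite{Am}, and you have simply spelled out the standard details --- the upper--triangular normally elliptic diffusion matrix, maximum principles and $\phi(0)=0$ for nonnegativity, the automatic $L^\infty$ bound on $v$ from comparison, and the bootstrap from boundedness of $\|u\|_{L^\infty}$ to boundedness of $\|\nabla v\|_{L^\infty}$ and then to parabolic Schauder estimates to translate Amann's abstract continuation criterion into the $L^\infty$ dichotomy~\eqref{21}. One small point worth noting if you wanted to make this fully rigorous: when $0<\alpha<1$ the nonlinearity $u\mapsto b_1u^{\alpha+1}$ is only $C^{1,\alpha}$ at $u=0$, not $C^2$, so you would want to invoke the version of Amann's local existence theorem that requires only locally Lipschitz (or $C^{1-}$) nonlinearities; this is harmless but should be said, and the paper does not flag it either.
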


Next we collect some properties of the local solution.
\begin{lemma}\label{lemma22}
Let $(u,v)$ be a nonnegative classical solution of (\ref{11}) in $\Omega\times(0,T_{\max})$.  Then the following statements hold true:

(i)  there exists a positive constant $C$ such that
\begin{equation}\label{22}
\int_\Omega u(x,t)dx\leq C, \forall t\in(0,T_{\max})
\end{equation}
and
\begin{equation}\label{23}
0< v(x,t) \leq \max\Big\{\frac{a_2}{c_2},\Vert v_0\Vert_{L^\infty(\Omega)}\Big\}, \forall (x,t)\in\Omega \times (0,T_{\max});
\end{equation}

(ii) for each $s\in[1,\frac{N}{N-1})$, there exists $C_s>0$ such that
\begin{equation}\label{24}
\Vert v (\cdot,t) \Vert_{W^{1,s}(\Omega)} \leq C_s, \forall t\in(0,T_{\max});
\end{equation}
moreover if $u\in L^p(\Omega)$ for some $p\in[1,\infty)$, there exists a positive constant $C$ dependent on $\Vert v_0 \Vert _{L^{q}(\Omega)}$ and $\vert \Omega\vert$ such that
\begin{equation}\label{25}
\Vert v(\cdot,t) \Vert_{W^{1,q}(\Omega)} \le C\Big(1+\sup_{s\in(0,t)}  \Vert u(\cdot,s)\Vert_{L^p(\Omega)}\Big), \forall t\geq0,
\end{equation}
where $q\in[1,\frac{Np}{N-p})$ if $p\in [1,N)$, $q\in [1,\infty)$ if $p=N$ and $q=\infty$ if $p>N$.
\end{lemma}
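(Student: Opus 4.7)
Part (i) splits into two independent steps. For the $L^1$-bound on $u$, I would integrate the first equation of (\ref{11}) over $\Omega$; the divergence term disappears by the homogeneous Neumann conditions, leaving
\begin{equation*}
\frac{d}{dt}\int_\Omega u\,dx = \int_\Omega(a_1 - b_1 u^\alpha - c_1 v)u\,dx \le a_1\int_\Omega u\,dx - b_1\int_\Omega u^{\alpha+1}\,dx.
\end{equation*}
Hölder's inequality provides $\int_\Omega u^{\alpha+1}\,dx \ge |\Omega|^{-\alpha}\bigl(\int_\Omega u\,dx\bigr)^{\alpha+1}$, turning the differential inequality into a Bernoulli-type ODE for $y(t):=\int_\Omega u(x,t)\,dx$; since $\alpha>0$ the superlinear damping eventually dominates the linear growth, forcing $y$ to remain uniformly bounded on $[0,T_{\max})$. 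The $L^\infty$-bound on $v$ is simpler still: dropping the nonnegative term $-b_2 uv$ gives $v_t \le D_2\Delta v + (a_2-c_2 v)v$, so scalar parabolic comparison against the ODE $w' = (a_2-c_2 w)w$ with initial value $\|v_0\|_{L^\infty(\Omega)}$ yields (\ref{23}).

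For part (ii) I would rewrite the $v$-equation as $v_t = D_2\Delta v - v + g$ where $g := (a_2 + 1 - b_2 u - c_2 v)v$, then express $v$ via Duhamel's formula using the Neumann heat semigroup $\{e^{t(D_2\Delta-1)}\}_{t\ge 0}$:
\begin{equation*}
v(\cdot,t) = e^{t(D_2\Delta-1)}v_0 + \int_0^t e^{(t-s)(D_2\Delta-1)} g(\cdot,s)\,ds.
\end{equation*}
The standard smoothing estimate $\|\nabla e^{t(D_2\Delta-1)}f\|_{L^q(\Omega)} \le C\,t^{-\frac{1}{2}-\frac{N}{2}(\frac{1}{p}-\frac{1}{q})} e^{-t}\|f\|_{L^p(\Omega)}$ then drives both inequalities. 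For (\ref{24}) I take $p=1$: part (i) together with the $L^\infty$-bound on $v$ yields $\|g(\cdot,s)\|_{L^1(\Omega)}\le C(1+\|u(\cdot,s)\|_{L^1(\Omega)}) \le C$, and the exponent $-\frac{1}{2}-\frac{N}{2}(1-\frac{1}{s})$ strictly exceeds $-1$ precisely when $s < N/(N-1)$, so the time integral converges uniformly. For (\ref{25}), boundedness of $v$ again gives $\|g(\cdot,s)\|_{L^p(\Omega)} \le C(1+\|u(\cdot,s)\|_{L^p(\Omega)})$, and the Sobolev-type constraint on $q$ stated in the lemma is exactly what makes the corresponding exponent strictly greater than $-1$; the borderline cases $p=N$ (any $q<\infty$) and $p>N$ ($q=\infty$) follow by the same template after replacing the $L^p$-$L^q$ semigroup bound by its $L^p$-$L^\infty$ analogue.

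The argument for (ii) is essentially independent of the first equation beyond the $L^1$ information it supplies, and avoids any direct interaction with the advection $\chi\phi(u)\nabla v$. The main technical care is bookkeeping: verifying that the singular kernels $(t-s)^{-\gamma}$ produced by the semigroup estimates are integrable at $s=t$ in the precise exponent ranges claimed, and absorbing the free term $\|e^{t(D_2\Delta-1)}v_0\|_{W^{1,q}(\Omega)}$ into the constant via the $W^{1,\infty}$-regularity of $v_0$ and the exponential decay factor. No conceptual obstacle is expected; the lemma serves as a preparatory regularity statement extracting the most one can about $v$ from minimal $L^p$-information on $u$, and will later be fed into the higher-order estimates of Section \ref{section3}.
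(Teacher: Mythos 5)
Your proposal is correct and follows essentially the same route as the paper: integrate the $u$-equation and exploit the superlinear $-b_1 u^{\alpha+1}$ damping (you use H\"older to get a Bernoulli-type ODE, the paper uses Young's inequality followed by Gr\"onwall — the same idea in slightly different clothing), parabolic comparison for the pointwise $v$-bound, and a Duhamel/variation-of-constants representation with $L^p$--$L^q$ Neumann heat-semigroup smoothing estimates for part (ii), with the same exponent bookkeeping and the same check that the singular kernel is integrable in the stated ranges of $s$ and $q$.
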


\begin{proof}
First of all, we can derive the nonnegativity of $u(x,t)$ and (\ref{23}) by the standard parabolic maximum principles and Hopf's boundary point lemma.  To show (\ref{22}), we integrate the $u$--equation in (\ref{11}) over $\Omega$ to get
\begin{align}\label{26}
    \frac{d}{dt}\int_{\Omega}u=a_1 \int_{\Omega}u-b_1\int_{\Omega}u^{\alpha+1}-c_1\int_{\Omega}uv \leq a_1 \int_{\Omega}u-b_1\int_{\Omega}u^{\alpha+1}.
  \end{align}
After applying the Young's inequality $(a_1+1) \int_{\Omega}u\leq b_1\int_{\Omega}u^{\alpha+1}+C_\Omega$ for some positive constant $C_\Omega$, we obtain from (\ref{26}) that
\[\frac{d}{dt}\int_{\Omega}u+\int_{\Omega}u\leq C\]
and solving this differential inequality by Gr\"{o}nwall's lemma leads us to (\ref{22}).

To prove \emph{(ii)}, we observe that (\ref{24}) is a special case of (\ref{25}) with $p=1$ and therefore we shall only verify the latter.  To this end, we write the following abstract formula of $v$
\begin{equation}\label{27}
v(\cdot,t)=e^{D_2 (\Delta-1)t}v_0+\int_0^t e^{D_2 (\Delta-1)(t-s)} \big(D_2v(\cdot,s)+g(u(\cdot,s),v(\cdot,s)) \big)ds,
\end{equation}
where $g(u,v)=(a_2-b_2u-c_2v)v$.  Thanks to the $L^p$--$L^q$ estimates between semigroups $\{e^{t\Delta}\}_{t\geq0}$ (e.g., Lemma 1.3 of \cite{Winkler}), we can find positive constants $C_{21}$, $C_{22}$ and $C_{23}$ such that
\begin{align}\label{28}
&\Vert v(\cdot,t) \Vert _{W^{1,q}}\nonumber \\
=&\Big \Vert e^{D_2 (\Delta-1)t}v_0+\int_0^t e^{D_2 (\Delta-1)(t-s)} \big(D_2v(\cdot,s)+g(u(\cdot,s),v(\cdot,s)) \big)ds\Big\Vert _{W^{1,q}} \nonumber\\
\leq& C_{21} \Vert v_0\Vert_{L^p}+C_{21}\int_0^t e^{-D_2\nu(t-s)}(1+(t-s)^{-\frac{1}{2}-\frac{N}{2}(\frac{1}{p}-\frac{1}{q})})(\Vert u(\cdot,t)\Vert_{L^p}+1) ds  \nonumber\\
\leq& C_{22} +C_{23}\int_0^t e^{-D_2\nu(t-s)}(1+(t-s)^{-\frac{1}{2}-\frac{N}{2}(\frac{1}{p}-\frac{1}{q})}) \Vert u(\cdot,s)\Vert_{L^p} ds \nonumber \\
\leq& C_{22}+C_{23} \Big(\int_0^t e^{-D_2\nu(t-s)}(1+(t-s)^{-\frac{1}{2}-\frac{N}{2}(\frac{1}{p}-\frac{1}{q})})ds\Big)\sup_{s\in(0,t)}  \Vert u(\cdot,s)\Vert_{L^p},
\end{align}
where $\nu$ is the first Neumann eigenvalue of $-\Delta$.  On the other hand, under the conditions on $q$ behind (\ref{25}) we have
\[ \sup_{t\in(0,\infty)}\int_0^t e^{-D_2\nu(t-s)}(1+(t-s)^{-\frac{1}{2}-\frac{N}{2}(\frac{1}{p}-\frac{1}{q})})  ds<\infty,\]
and therefore (\ref{25}) follows from (\ref{28}).
\end{proof}
According to (\ref{24}) in Lemma \ref{lemma22}, $\Vert\nabla v(\cdot,t)\Vert_{L^s}$ is bounded for $s\in[1,\frac{N}{N-1})$.  Therefore for $N=1$, one has the boundedness of $\Vert\nabla v(\cdot,t)\Vert_{L^s}$ for each fixed $s\in[1,\infty)$.  By the standard Moser--Alikakos iteration we can easily prove the global existence and boundedness in Theorem \ref{theorem11} and Theorem \ref{theorem12}.  Therefore, in the sequel we shall focus on $N\geq2$ for which one has the boundedness of $\Vert\nabla v(\cdot,t)\Vert_{L^s}$ for each fixed $s\in[1,\frac{N}{N-1})$.  We want to point out that $\frac{N}{N-1}\leq2$ if $N\geq2$ and our next result indicates that $s=2$ can be achieved if $\alpha\geq1$.
\begin{lemma}
Suppose that $\alpha\geq 1$, then there exists a positive constant $C$ such that
\begin{equation} \label{29}
\Vert \nabla v(\cdot,t) \Vert_{L^2(\Omega)} \leq C, \forall t\in(0,T_{\max}).
\end{equation}
\end{lemma}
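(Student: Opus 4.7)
The plan is to run an energy estimate for $\|\nabla v(\cdot,t)\|_{L^2(\Omega)}^2$ by testing the $v$-equation in (\ref{11}) against $-\Delta v$. With the homogeneous Neumann condition and integration by parts, this yields
\begin{equation*}
\frac{1}{2}\frac{d}{dt}\int_\Omega |\nabla v|^2 + D_2\int_\Omega |\Delta v|^2 = -\int_\Omega (a_2-b_2u-c_2v)\,v\,\Delta v.
\end{equation*}
Expanding the right-hand side, using $-\int_\Omega v\Delta v=\int_\Omega |\nabla v|^2$ and $\int_\Omega v^2\Delta v=-2\int_\Omega v|\nabla v|^2$, and discarding the nonnegative term $2c_2\int_\Omega v|\nabla v|^2$ that appears on the left, one obtains
\begin{equation*}
\frac{1}{2}\frac{d}{dt}\int_\Omega |\nabla v|^2 + D_2\int_\Omega |\Delta v|^2 \leq a_2\int_\Omega |\nabla v|^2 + b_2\int_\Omega uv\,\Delta v.
\end{equation*}

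The term $b_2\int_\Omega uv\,\Delta v$ is the only genuinely coupling contribution. Applying Young's inequality and invoking the $L^\infty$-bound on $v$ from (\ref{23}), it is dominated by $\tfrac{D_2}{2}\int_\Omega|\Delta v|^2+C\int_\Omega u^2$, and after absorption into the left-hand side a positive multiple of $\int_\Omega|\Delta v|^2$ remains. The lower-order term $a_2\int_\Omega|\nabla v|^2$ can in turn be controlled via $\int_\Omega|\nabla v|^2=-\int_\Omega v\Delta v$ and the Cauchy--Schwarz inequality, using once more the $L^\infty$-bound on $v$, by $\varepsilon\int_\Omega|\Delta v|^2+C_\varepsilon$. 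Choosing $\varepsilon$ small and then recovering a coercive term $\lambda\int_\Omega|\nabla v|^2$ on the left from the same elliptic-type inequality, one arrives at a differential inequality of the form
\begin{equation*}
y'(t)+\lambda y(t)\leq C_1+C_2\int_\Omega u^2(x,t)\,dx,\qquad y(t):=\|\nabla v(\cdot,t)\|_{L^2(\Omega)}^2,
\end{equation*}
with $\lambda>0$.

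The hypothesis $\alpha\geq 1$ enters through the forcing. Integrating (\ref{26}) in time over one unit interval and using the uniform $L^1$-bound (\ref{22}) on $u$, one gets $\int_t^{t+1}\int_\Omega u^{\alpha+1}\,dxd\tau\leq C$ uniformly in $t$; since $u^2\leq u^{\alpha+1}+1$ for $\alpha\geq 1$, the same uniform estimate holds for $\int_t^{t+1}\int_\Omega u^2$. A standard uniform-Gronwall argument then converts this time-averaged control into a pointwise bound on $y(t)$: writing $y(t)\leq y(0)e^{-\lambda t}+\int_0^t e^{-\lambda(t-s)}\bigl(C_1+C_2\int_\Omega u^2(\cdot,s)\bigr)ds$ and splitting the integral over unit intervals $[t-k-1,t-k]$, the geometric factor $e^{-\lambda k}$ sums to a finite constant and yields (\ref{29}).

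The main obstacle is exactly this passage from a time-integrated to a pointwise bound: one does \emph{not} have $\|u(\cdot,t)\|_{L^2}$ uniformly bounded in $t$, only its local-in-time average. The argument succeeds because the energy inequality for $v$ produces a genuine dissipative term $\lambda y$, which is enough to absorb forcings whose integral on every unit interval is bounded. This is also precisely where $\alpha\geq 1$ is indispensable: for $\alpha<1$, the mass dissipation in the $u$-equation only controls $\int u^{\alpha+1}$ with a subquadratic exponent, and the above strategy breaks down, explaining why the statement is limited to $\alpha\geq 1$.
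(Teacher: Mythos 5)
Your proof is correct, and it reaches the same differential inequality
\[
\frac{1}{2}\frac{d}{dt}\int_\Omega |\nabla v|^2 \le -\frac{D_2}{2}\int_\Omega |\Delta v|^2 + a_2\int_\Omega |\nabla v|^2 + \mu\int_\Omega u^2
\]
as the paper does, and uses the same elliptic absorption $\int_\Omega |\nabla v|^2 = -\int_\Omega v\Delta v \le \varepsilon\int_\Omega|\Delta v|^2 + C_\varepsilon$ to manufacture coercivity. Where you diverge from the paper is in how the troublesome $\mu\int_\Omega u^2$ forcing is disposed of. The paper multiplies the $L^1$ evolution inequality $\frac{d}{dt}\int_\Omega u \le a_1\int_\Omega u - b_1\int_\Omega u^{\alpha+1}$ by $\tfrac{2\mu}{b_1}$ and adds it to the $\nabla v$ energy, producing a single combined functional $y(t) = \tfrac{2\mu}{b_1}\int_\Omega u + \tfrac12\int_\Omega|\nabla v|^2$ for which $y' + y \le C$ holds pointwise in time; the term $\mu\int u^2$ is absorbed instantaneously by the dissipation $-\mu\int u^{\alpha+1}$ coming from the logistic source, via the algebraic bound $u^2 - u^{\alpha+1} \le C$ valid for $\alpha \ge 1$. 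You instead keep the inequality $y' + \lambda y \le C_1 + C_2\int_\Omega u^2$ for $y=\int_\Omega|\nabla v|^2$ alone, prove the time-averaged bound $\int_t^{t+1}\int_\Omega u^2 \le C$ by integrating the $L^1$ evolution over unit intervals (again using $u^2\le u^{\alpha+1}+1$ for $\alpha\ge 1$), and close via Duhamel's formula and a geometric-series (uniform Gr\"onwall) estimate. Both routes exploit the very same pointwise comparison $u^2 \lesssim u^{\alpha+1}+1$; the paper packages it into a clean Lyapunov functional, while your argument demonstrates that the weaker, time-averaged control already suffices because the $\nabla v$ energy has genuine exponential dissipation. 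The paper's version is a bit shorter once written out, but your version is a standard and robust alternative, and your closing paragraph correctly identifies why $\alpha\ge 1$ is the threshold for this strategy.
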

\begin{proof}
 Testing the $v$-equation in (\ref{11}) by $\Delta v$ and then integrating it over $\Omega$ by parts, we have
\begin{align}\label{210}
    \frac{1}{2}\frac{d}{dt}\int_{\Omega}\vert \nabla v \vert^2 =&\int_{\Omega}\nabla v \cdot \nabla v_t \nonumber \\
    =&\int_{\Omega}\nabla v\cdot\nabla [D_2\Delta v+(a_2-b_2u-c_2v)v] \nonumber \\
    =&  -D_2\int_{\Omega}\vert \Delta v \vert^2 + a_2\int_{\Omega}\vert \nabla v \vert^2 + \int_{\Omega} b_2 uv\Delta v -2c_2\int_\Omega v|\nabla v|^2 \nonumber \\
    \leq&   -D_2\int_{\Omega}\vert \Delta v \vert^2 + a_2\int_{\Omega}\vert \nabla v \vert^2  + \frac{b^2_2}{2D_2}\int_{\Omega} u^2v^2 +\frac{D_2}{2}\int_{\Omega}|\Delta v|^2 \nonumber \\
     \leq& -\frac{D_2}{2}\int_{\Omega}\vert \Delta v \vert^2+a_2\int_{\Omega}\vert \nabla v \vert^2+\mu\int_{\Omega} u^2,
\end{align}
where $\mu:=\frac{b^2_2\|v\|^2_{L^{\infty}(\Omega)}}{2D_2}$ and $C_{24}$ is a positive constant.  By Sobolev interpolation inequality and in light of the boundedness of $\Vert v\Vert_{L^\infty(\Omega)}$, we obtain that for positive constants $C_{25}$ and $C_{26}$
  \begin{align*}
    \Big(a_2+\frac{1}{2}\Big)\int_{\Omega}\vert \nabla v\vert ^2\leq \frac{D_2}{2}\int_{\Omega}\vert \Delta v\vert^2+C_{25}\int_{\Omega} v^2\leq \frac{D_2}{2}\int_{\Omega}\vert \Delta v\vert^2+C_{26}.
  \end{align*}
Multiplying (\ref{26}) by $\frac{2\mu}{b_1}$ and then adding it to (\ref{210}), we have
  \begin{align*}
    &\frac{d}{dt}\Big(\frac{2\mu}{b_1}\int_{\Omega}u +\frac{1}{2}\int_{\Omega}\vert \nabla v \vert^2 \Big)
    +\Big(\frac{2\mu}{b_1}\int_{\Omega}u +\frac{1}{2}\int_{\Omega}\vert \nabla v \vert^2 \Big)      \\
    \leq &\Big(\frac{2a_1\mu}{b_1} \int_{\Omega}u-\mu\int_{\Omega}u^{\alpha+1}\Big)
    +\mu\Big(\int_{\Omega}u^{2} -\int_{\Omega}u^{\alpha+1} \Big)+C_{27}
     \leq C_{28},
  \end{align*}
where $C_{27}$ and $C_{28}$ are positive constant, and therefore $\Vert\nabla v\Vert_{L^2}$ is bounded for all $t\in(0,T_{\max})$ as desired.
\end{proof}

\section{Parabolic--parabolic system in multi--dimensional domain}\label{section3}
According to Lemma \ref{proposition21} and (\ref{23}), in order to prove Theorem \ref{theorem11} and Theorem \ref{theorem12}, it is sufficient to show that $\Vert u(\cdot,t)\Vert_{L^\infty(\Omega)}$ is bounded for $t\in(0,T_{\max})$ and therefore $T_{\max}=\infty$ and the solution is global.  Indeed we will show that $\Vert u(\cdot,t)\Vert_{L^\infty(\Omega)}$ is uniformly bounded in $t\in(0,\infty)$.  To this end, it is sufficient to prove that $\Vert u(\cdot,t)\Vert_{L^p(\Omega)}$ is bounded for some $p$ large according to (\ref{25}).  For this purpose we will give a combined estimate on $\int_{\Omega} u^p+\int_{\Omega} |\nabla v|^{2q}$ for both $p$ and $q$ large based on the idea recently developed in \cite{LTW,TW2,Winkler2} etc.  That being said, we will first prove the boundedness of $\int_{\Omega} u^p$ in terms of a functional involving $\int_{\Omega} |\nabla v|^{2q}$ in Lemma \ref{lemma31} and Lemma \ref{lemma32}.  Choosing $p>N$, one obtains from (\ref{25}) the boundedness of $\Vert v \Vert_{W^{1,\infty}}$, and then that of $\Vert u \Vert_{L^{\infty}}$ easily follows from the standard Moser--Alikakos $L^p$--iteration in \cite{A0}.

\subsection{A priori estimates}
For any $p\geq2$, we multiply the $u$-equation in (\ref{11}) by $u^{p-1}$ and then integrate it over $\Omega$ by parts
\begin{align}\label{31}
\frac{1}{p}\frac{d}{dt}\int_{\Omega} u^p=&\int_{\Omega} u^{p-1} \nabla\cdot(D_1(u)\nabla u)+\int_{\Omega} u^{p-1} \nabla\cdot(\chi \phi(u)\nabla v)+\int_{\Omega} u^p(a_1-b_1u^\alpha-c_1v)      \nonumber\\
=&-(p-1)\int_{\Omega} D_1(u)u^{p-2}|\nabla u|^2-(p-1)\int_{\Omega} \chi \phi(u)u^{p-2}\nabla u\nabla v  \nonumber\\
&+\int_{\Omega} u^p(a_1-b_1u^\alpha-c_1v).
\end{align}
In light of $D_1(u)\geq M_1(1+u)^{m_1}>M_1 u^{m_1}$, we have
\begin{equation}\label{32}
(p-1)\int_{\Omega} D_1(u)u^{p-2}|\nabla u|^2
\geq M_1(p-1)\int_{\Omega} u^{p+m_1-2}|\nabla u|^2
= \frac{4M_1(p-1)}{(p+m_1)^2}\int_{\Omega}|\nabla u^{\frac{p+m_1}{2}}|^2,
\end{equation}
where the identity follows from
\[ u^{p+m_1-2}|\nabla u|^2=\frac{4}{(p+m_1)^2} |\nabla u^{\frac{p+m_1}{2}}|^2.\]
Moreover, Young's inequality implies
\begin{align}\label{33}
  & -(p-1)\int_{\Omega} \chi \phi(u)u^{p-2}\nabla u\nabla v \nonumber \\
\leq &\frac{M_1(p-1)}{2}\int_{\Omega} u^{p+m_1-2}|\nabla u|^2+ \frac{\chi^2(p-1)}{2M_1} \int_{\Omega} u^{p-m_1-2}\phi^2(u)|\nabla v|^2 \nonumber \\
\leq &\frac{2M_1(p-1)}{(p+m_1)^2}\int_{\Omega}|\nabla u^{\frac{p+m_1}{2}}|^2+\frac{\chi^2 M_2^2(p-1)}{2M_1} \int_{\Omega} u^{p-m_1+2m_2-2}|\nabla v|^2
\end{align}
and
\begin{align}\label{34}
\Big(a_1+\frac{1}{p}\Big)\int_{\Omega}u^{p}\leq \frac{b_1}{2}\int_{\Omega}u^{p+\alpha}+C_{31},
\end{align}
where $C_{31}$ is a positive constant dependent on $p$.  Thanks to (\ref{32}), (\ref{33}) and (\ref{34}), we have from (\ref{31})
\begin{align}\label{35}
  &\frac{1}{p}\frac{d}{dt}\int_{\Omega} u^p +\frac{1}{p}\int_{\Omega} u^p +\frac{2M_1(p-1)}{(p+m_1)^2}\int_{\Omega} |\nabla u^{\frac{p+m_1}{2}}|^2+\frac{b_1}{2}\int_{\Omega}u^{p+\alpha}    \nonumber\\
\leq& \frac{\chi^2 M_2^2(p-1)}{2M_1} \int_{\Omega} u^{p-m_1+2m_2-2}|\nabla v|^2 +C_{31}.
\end{align}

On the other hand, for any $q>1$, we have from the $v$-equation in (\ref{11})
\begin{align}\label{36}
&\frac{1}{2q}\frac{d}{dt}\int_{\Omega} |\nabla v|^{2q}
=\int_{\Omega} |\nabla v|^{2q-2} \nabla v \cdot\nabla v_t  \nonumber \\
=&\smash[b]{\overbrace{ D_2\int_{\Omega} |\nabla v|^{2q-2} \nabla v\cdot \nabla\Delta v\,}^\text{$I_1$}}+
\smash[b]{\overbrace{\int_{\Omega} |\nabla v|^{2q-2} \nabla v\cdot \nabla [(a_2-b_2u-c_2v)v]\,}^\text{$I_2$}}.
\end{align}
In light of the identity
\[\nabla v\cdot \nabla \Delta v=\frac{1}{2}\Delta |\nabla v|^2- |D^2 v|^2,\]
we first estimate $I_{1}$ in (\ref{36}) through
\begin{align}\label{37}
I_1=&\frac{D_2}{2}\int_{\Omega} |\nabla v|^{2q-2} \Delta|\nabla v|^2-D_2\int_{\Omega} |\nabla v|^{2q-2} |D^2 v|^2 \nonumber\\
=&\frac{D_2}{2}\int_{\partial\Omega} |\nabla v|^{2q-2} \frac{\partial|\nabla v|^2}{\partial n}-\frac{D_2}{2}\int_{\Omega}\nabla |\nabla v|^{2q-2}\cdot \nabla|\nabla v|^2   -D_2\int_{\Omega} |\nabla v|^{2q-2} |D^2 v|^2       \nonumber\\
=&\smash[b]{\overbrace{\frac{D_2}{2}\int_{\partial\Omega} |\nabla v|^{2q-2} \frac{\partial|\nabla v|^2}{\partial n}\,}^\text{$I_{11}$}}-
\smash[b]{\overbrace{ \frac{(q-1)D_2}{2}\int_{\Omega} |\nabla v|^{2q-4}  \Big| \nabla|\nabla v|^2 \Big|^2\,}^\text{$I_{12}$}}\nonumber \\
&-\smash[b]{\overbrace{D_2\int_{\Omega} |\nabla v|^{2q-2} |D^2 v|^2 \,}^\text{$I_{13}$}}.
\end{align}

To further estimate $I_{11}$, we invoke the inequality $\frac{\partial|\nabla v|^2}{\partial n}\leq C_{\Omega}|\nabla v|^2$ (e.g. inequality (2.4) in \cite{ISY} due to \cite{HT})
with $C_{\Omega}$ being a positive constant depending only on the curvatures of $\partial\Omega$ to deduce
\begin{equation}\label{38}
I_{11}=\frac{D_2}{2}\int_{\partial\Omega} |\nabla v|^{2q-2} \frac{\partial|\nabla v|^2}{\partial n}\leq  \frac{D_2 C_{\Omega}}{2}\int_{\partial\Omega} |\nabla v|^{2q}
  :=C_{\Omega}\||\nabla v|^q\|^2_{L^2(\partial\Omega)}.
\end{equation}
By taking $r\in(0,\frac{1}{2})$, we have from (1.9) in \cite{ISY} that the trace embedding $ W^{r+\frac{1}{2},2}(\Omega) (\hookrightarrow W^{r,2}(\partial\Omega))\hookrightarrow L^2(\partial\Omega)$ is compact and therefore there exists a positive constant $C_{32}$ such that
\begin{align}\label{39}
  \||\nabla v|^q\|_{L^2(\partial\Omega)}\leq C_{32}\||\nabla v|^q\|_{W^{r+\frac{1}{2},2}(\Omega)}.
\end{align}
Let $h_1\in(0,1)$ satisfy
\[\frac{1}{2}-\frac{r+\frac{1}{2}}{N}=\Big(1-h_1\Big)\frac{q}{s}+h_1\Big(\frac{1}{2}-\frac{1}{N}\Big),\]
or
\[h_1=\frac{\frac{q}{s}-(\frac{1}{2}-\frac{1}{2N}-\frac{r}{N})}{\frac{q}{s}-(\frac{1}{2}-\frac{1}{N})}\in\Big(r+\frac{1}{2},1\Big),\]
where we choose some $s\in [1,\frac{N}{N-1})$ if $\alpha<1$ and $s=$ if $\alpha\geq1$, then we invoke the fractional Gagliardo--Nirenberg interpolation inequality to deduce
\begin{align}\label{310}
  \||\nabla v|^q\|_{W^{r+\frac{1}{2},2}(\Omega)}&\leq C_{33} \|\nabla|\nabla v|^q\|^{h_1}_{L^2(\Omega)}\||\nabla v|^q\|^{1-h_1}_{L^{\frac{s}{q}}(\Omega)}+C_{34}\||\nabla v|^q\|_{L^{\frac{s}{q}}(\Omega)}\nonumber\\
  & \leq C_{35} \|\nabla|\nabla v|^q\|^{h_1}_{L^2(\Omega)}+C_{36},
\end{align}
where we have applied the fact that $\Vert \nabla v\Vert_{L^s}$ is uniformly bounded.  In conjunction with (\ref{39}) and (\ref{310}), we apply Young's inequality and the fact that $h_1<1$ in (\ref{38}) to obtain
\begin{align}\label{311}
I_{11}
&\leq 2C_\Omega C^2_{32} (C^2_{35} \|\nabla|\nabla v|^q\|^{2h_1}_{L^2(\Omega)}+ C^2_{36})\nonumber \\
&\leq \frac{(q-1)D_2}{q^2} \int_\Omega\Big|\nabla|\nabla v|^q\Big|^2+ C_{37}
\end{align}
where $C_{37}$ is a positive constant.  To further estimate $I_{12}$, we note that
\[|\nabla v|^{2q-4}\Big|\nabla|\nabla v|^2\Big|^2=\frac{4}{q^2}\Big|\nabla|\nabla v|^q\Big|^2,\]
then
\begin{equation}\label{312}
I_{12}=\frac{2(q-1)D_2}{q^2}\int_\Omega \Big|\nabla|\nabla v|^q\Big|^2.
\end{equation}
Substituting (\ref{311}) and (\ref{312}) into (\ref{37}) gives us
\begin{equation}\label{313}
I_1\leq  -\frac{(q-1)D_2}{q^2}\int_\Omega\Big|\nabla|\nabla v|^q\Big|^2   -D_2\int_{\Omega} |\nabla v|^{2q-2} |D^2 v|^2 +C_{38}.
\end{equation}

To estimate $I_2$, we obtain from the integration by parts
\begin{align}\label{314}
I_2=&\int_{\Omega} |\nabla v|^{2q-2} \nabla v \cdot \nabla[(a_2-b_2u-c_2v)v]      \nonumber \\
=&-\int_{\Omega} (a_2-b_2u-c_2v)v \nabla \cdot(|\nabla v|^{2q-2} \nabla v)       \nonumber  \\
=&-\int_{\Omega} (a_2-b_2u-c_2v)v |\nabla v|^{2q-2} \Delta v \nonumber  \\
&- (q-1)\int_{\Omega} (a_2-b_2u-c_2v)v |\nabla v|^{2q-4} \nabla|\nabla v|^2\cdot \nabla v   \nonumber\\
=&-\smash[b]{\overbrace{ \int_{\Omega} (a_2-c_2v)v |\nabla v|^{2q-2} \Delta v\,}^\text{$I_{21}$}}-\smash[b]{\overbrace{ (q-1)\int_{\Omega} (a_2-c_2v)v |\nabla v|^{2q-4} \nabla|\nabla v|^2\cdot \nabla v\,}^\text{$I_{22}$}}              \nonumber\\
&+\smash[b]{\overbrace{b_2\int_{\Omega} uv|\nabla v|^{2q-2} \Delta v\,}^\text{$I_{23}$}}
+\smash[b]{\overbrace{(q-1)b_2\int_{\Omega} uv |\nabla v|^{2q-4} \nabla|\nabla v|^2\cdot \nabla v\,}^\text{$I_{24}$}}. \\ \nonumber
\end{align}
Moreover we apply Young's inequality to have
\begin{align}\label{315}
-I_{21} &\leq \frac{D_2}{2N} \int_{\Omega} |\nabla v|^{2q-2} |\Delta v|^2 +\frac{N}{2D_2}\int_{\Omega} (a_2-c_2v)^2v^2|\nabla v|^{2q-2}         \nonumber\\
&\leq \frac{D_2}{2} \int_{\Omega} |\nabla v|^{2q-2} |D^2 v|^2 +C_{39}\int_{\Omega}|\nabla v|^{2q-2},
\end{align}
where $C_{39}$ is a positive constant and the second inequality comes from $|\Delta v|^2\leq N |D^2 v|^2$.  Similarly we have
\begin{align}\label{316}
-I_{22} \leq& \frac{(q-1)D_2}{16}
\int_{\Omega} |\nabla v|^{2q-4}  \Big| \nabla|\nabla v|^2 \Big|^2  +C_{310}\int_{\Omega}|\nabla v|^{2q-2},
\end{align}
\begin{align}\label{317}
I_{23} &\leq \frac{D_2}{2} \int_{\Omega} |\nabla v|^{2q-2} |D^2 v|^2 +C_{311}\int_{\Omega} u^2|\nabla v|^{2q-2},
\end{align}
and
\begin{align}\label{318}
I_{24} \leq \frac{(q-1)D_2}{16}\int_{\Omega} |\nabla v|^{2q-4}  \Big| \nabla|\nabla v|^2 \Big|^2  +C_{312}\int_{\Omega} u^2|\nabla v|^{2q-2},
\end{align}
with positive constants $C_{310}, C_{311}$ and $ C_{312}$.   Collecting (\ref{315})--(\ref{318}), we infer from (\ref{314})
\begin{align}\label{319}
I_2
\leq& D_2\int_{\Omega} |\nabla v|^{2q-2} |D^2 v|^2+\frac{(q-1)D_2}{8}\int_{\Omega} |\nabla v|^{2q-4}  \Big| \nabla|\nabla v|^2 \Big|^2\nonumber\\
&+(C_{39}+C_{310})\int_{\Omega} |\nabla v|^{2q-2}+(C_{311}+C_{312})\int_{\Omega} u^2|\nabla v|^{2q-2}\nonumber\\
=& D_2\int_{\Omega} |\nabla v|^{2q-2} |D^2 v|^2+\frac{(q-1)D_2}{2q^2}\int_\Omega \Big|\nabla|\nabla v|^q\Big|^2\nonumber\\
&+(C_{39}+C_{310})\int_{\Omega} |\nabla v|^{2q-2}+(C_{311}+C_{312})\int_{\Omega} u^2|\nabla v|^{2q-2}.
\end{align}
Combining (\ref{319}) with (\ref{313}), we have from (\ref{36})
\begin{align*}
\frac{1}{2q}\frac{d}{dt}\int_{\Omega} |\nabla v|^{2q}\leq &
-\frac{(q-1)D_2}{2q^2}\int_\Omega\Big|\nabla|\nabla v|^q\Big|^2+(C_{39}+C_{310})\int_{\Omega} |\nabla v|^{2q-2}  \nonumber\\
&+(C_{311}+C_{312})\int_{\Omega} u^2|\nabla v|^{2q-2}+C_{38}
\end{align*}
or equivalently
\begin{align}\label{320}
&\frac{1}{2q} \frac{d}{dt}\int_{\Omega} |\nabla v|^{2q} +\frac{1}{2q} \int_{\Omega} |\nabla v|^{2q}
+\frac{(q-1)D_2}{2q^2}\int_{\Omega}\Big| \nabla|\nabla v|^q \Big|^2    \nonumber\\
\leq&C_{313} \int_{\Omega} |\nabla v|^{2q}+C_{314}\int_{\Omega} u^2|\nabla v|^{2q-2}+C_{38},
\end{align}
where $C_{313}=C_{39}+C_{310}+\frac{1}{2q}$ and $C_{314}=C_{311}+C_{312}$.  Using Gagliardo--Nirenberg interpolation inequality we estimate
\begin{align}\label{321}
  C_{313}\int_{\Omega} |\nabla v|^{2q}=&C_{313}\Big\||\nabla v|^q\Big\|^2_{L^2(\Omega)}\nonumber\\
  \leq & C_{315}\Big\|\nabla|\nabla v|^q\Big\|^{2h_2}_{L^2(\Omega)}\Big\||\nabla v|^q\Big\|^{2(1-h_2)}_{L^{\frac{s}{q}}(\Omega)}+C_{315}\Big\||\nabla v|^q\Big\|^{2}_{L^{\frac{s}{q}}(\Omega)} \nonumber \\
  \leq & \frac{(q-1)D_2}{4q^2}\Big\|\nabla|\nabla v|^q\Big\|^{2}_{L^2(\Omega)}+C_{316},
\end{align}
thanks to
\[h_2=\frac{\frac{q}{s}-\frac{1}{2}}{\frac{q}{s}-(\frac{1}{2}-\frac{1}{N})}\in (0,1),\]
and the boundedness of $\||\nabla v|^q\|^{2}_{L^{\frac{s}{q}}(\Omega)} =\||\nabla v|\|^{2q}_{L^s(\Omega)}$ in (\ref{24}).

Again, thanks to Young's inequality, (\ref{320}) and (\ref{321}) imply
\begin{align}\label{322}
&\frac{1}{2q}\frac{d}{dt}\int_{\Omega} |\nabla v|^{2q} +\frac{1}{2q}\int_{\Omega} |\nabla v|^{2q}
+\frac{(q-1)D_2}{4q^2}\int_{\Omega}\Big| \nabla|\nabla v|^q \Big|^2   \nonumber \\
\leq& C_{314} \int_{\Omega} u^2|\nabla v|^{2q-2}+C_{317}.
\end{align}
Finally by collecting (\ref{35}) and (\ref{322}) we conclude
\begin{align}\label{323}
  &\frac{d}{dt}\Big( \frac{1}{p}\int_{\Omega} u^p + \frac{1}{2q}\int_{\Omega} |\nabla v|^{2q}        \Big)
  +\Big( \frac{1}{p}\int_{\Omega} u^p + \frac{1}{2q}\int_{\Omega} |\nabla v|^{2q}        \Big)   \nonumber\\
+  &\frac{2M_1(p-1)}{(p+m_1)^2}\int_{\Omega} |\nabla u^{\frac{p+m_1}{2}}|^2
  +\frac{(q-1)D_2}{4q^2}\int_{\Omega}\Big| \nabla|\nabla v|^q \Big|^2+\frac{b_1}{2}\int_{\Omega}u^{p+\alpha} \nonumber\\
\leq &\frac{\chi^2M_2^2(p-1)}{2M_1}  \overbrace{\int_{\Omega} u^{p-m_1+2m_2-2}|\nabla v|^2}^{I_{31}}+
   C_{314}\overbrace{\int_{\Omega} u^2|\nabla v|^{2q-2}}^{I_{32}}+C_{317}.
\end{align}

\begin{lemma}\label{lemma31}
Let $(u,v)$ be a positive classical solution of (\ref{11}) in $\Omega\times(0, T_{\max})$.  Suppose that $m_1$ and $m_2$ satisfy condition (\ref{14}).  Then for large $p$ and $q$ there exists a positive constant $C(p,q)$ such that
\begin{equation}\label{324}
\int_{\Omega} u^p+\int_{\Omega} |\nabla v|^{2q}\leq C(p,q),\forall t\in(0,T_{\max}).
\end{equation}
\end{lemma}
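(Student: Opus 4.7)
The plan is to start from the combined differential inequality (3.23) and absorb the two source terms $I_{31}$ and $I_{32}$ into the dissipative quantities on the left, namely $\int_\Omega |\nabla u^{(p+m_1)/2}|^2$, $\int_\Omega |\nabla|\nabla v|^q|^2$, and $\int_\Omega u^{p+\alpha}$. Once this absorption is carried out, (3.23) reduces to an ordinary differential inequality $y'(t)+y(t) \le C$ for $y(t)=\frac{1}{p}\int_\Omega u^p + \frac{1}{2q}\int_\Omega |\nabla v|^{2q}$, and (3.24) then follows by Gr\"onwall's lemma.

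To estimate $I_{31} = \int_\Omega u^{p - m_1 + 2m_2 - 2}|\nabla v|^2$, I would apply H\"older's inequality to split the integrand into a power of $u^{(p+m_1)/2}$ lying in some $L^a(\Omega)$ and a power of $|\nabla v|^q$ lying in the conjugate $L^b(\Omega)$. The $L^a$-norm of $u^{(p+m_1)/2}$ is then controlled by the Gagliardo--Nirenberg interpolation inequality between $\|\nabla u^{(p+m_1)/2}\|_{L^2(\Omega)}$ and $\|u^{(p+m_1)/2}\|_{L^{2/(p+m_1)}(\Omega)} = \|u\|_{L^1(\Omega)}^{(p+m_1)/2}$, which is uniformly bounded by (2.2). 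The $L^b$-norm of $|\nabla v|^q$ is handled analogously via Gagliardo--Nirenberg between $\|\nabla |\nabla v|^q\|_{L^2(\Omega)}$ and $\||\nabla v|^q\|_{L^{s/q}(\Omega)} = \|\nabla v\|_{L^s(\Omega)}^q$, which is bounded using (2.4) in the general case with $s \in [1,\tfrac{N}{N-1})$ and using (2.9) with $s=2$ in the case $\alpha \ge 1$.

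The term $I_{32} = \int_\Omega u^2 |\nabla v|^{2q-2}$ is treated by the same two-step strategy of H\"older followed by Gagliardo--Nirenberg, after which Young's inequality is applied to the resulting products; any residual power of $u$ not absorbed by $\int_\Omega |\nabla u^{(p+m_1)/2}|^2$ can be absorbed by the coercive term $\tfrac{b_1}{2}\int_\Omega u^{p+\alpha}$, which is where the decay exponent $\alpha$ enters the analysis. The crucial verification is that condition (1.4) makes the resulting Young exponents strictly less than one: the two sub-cases $\alpha<1$ and $\alpha\ge1$ in (1.4) correspond precisely to (i) whether we have only the restricted range $s<\tfrac{N}{N-1}$ from (2.4) or the improved $s=2$ from (2.9), and (ii) how strongly the dissipation $u^{p+\alpha}$ can swallow the surplus powers of $u$ coming from the $u^{p-m_1+2m_2-2}$ factor in $I_{31}$.

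The main obstacle will be the bookkeeping of exponents: the H\"older splits, the two Gagliardo--Nirenberg interpolation parameters, and the final Young inequalities must be arranged so that the combined interpolation exponent multiplying $\|\nabla u^{(p+m_1)/2}\|_{L^2(\Omega)}^2$ and $\|\nabla|\nabla v|^q\|_{L^2(\Omega)}^2$ is strictly below one. The threshold value of $m_2-m_1$ at which that combined exponent equals one is exactly the right-hand side of (1.4), so the strict inequality there provides the slack needed for the absorption. Finally one must take $p$, and correspondingly $q$, large enough that Gagliardo--Nirenberg is applicable in the target dimension and that the error constants coming from the low-order $\|u\|_{L^1}$ and $\|\nabla v\|_{L^s}$ terms remain under control; this is the reason the statement is phrased for large $p$ and $q$.
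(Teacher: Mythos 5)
Your overall plan matches the paper's: estimate $I_{31}$ and $I_{32}$ in (\ref{323}) via H\"older's inequality into conjugate factors of powers of $u^{(p+m_1)/2}$ and $|\nabla v|^q$, interpolate each factor by Gagliardo--Nirenberg against the endpoint norms $\|u\|_{L^1(\Omega)}$ and $\|\nabla v\|_{L^s(\Omega)}$ (with $s\in[1,\tfrac{N}{N-1})$ from (\ref{24}) when $0<\alpha<1$, or $s=2$ from (\ref{29}) when $\alpha\geq1$), and then use Young so that the combined exponent multiplying the two gradient dissipation terms stays strictly below $2$. That is exactly the paper's mechanism, and your observation that the threshold in (\ref{14}) is the boundary at which this combined exponent hits the critical value is correct; in the paper this appears as the inequality $\zeta_2(p,q,s)>\zeta_1(p,q,s)$ for $p,q$ large and $s$ near its upper limit.

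However, there is one genuine misstep in your account of where $\alpha$ enters. You assert that a residual power of $u$ is absorbed into the coercive term $\tfrac{b_1}{2}\int_\Omega u^{p+\alpha}$, and that this is one of the two ways $\alpha$ influences (\ref{14}). That is not how Lemma \ref{lemma31}'s proof works: after the H\"older--Gagliardo--Nirenberg--Young chain (see (\ref{334})--(\ref{335})), the terms $I_{31}$ and $I_{32}$ are absorbed entirely into $\epsilon\int_\Omega|\nabla u^{(p+m_1)/2}|^2$ and $\epsilon\int_\Omega|\nabla|\nabla v|^q|^2$, and the term $\tfrac{b_1}{2}\int_\Omega u^{p+\alpha}$ is simply discarded from (\ref{323}). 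In this lemma $\alpha$ plays only one role: it determines whether Lemma 2.3 is available, i.e.\ whether you may take $s=2$ or are restricted to $s<\tfrac{N}{N-1}$, which shifts the achievable range of $\zeta_1,\zeta_2$ and hence the threshold in (\ref{14}). The absorption-into-$u^{p+\alpha}$ mechanism you describe is precisely what drives Lemma \ref{lemma32} and its hypothesis (\ref{15}) (which explicitly contains $\max\{\alpha,m_1\}$), where the estimates of $I_{31},I_{32}$ are done directly by Young rather than H\"older. You should separate these two lemmas' mechanisms before carrying out the exponent verification, since conflating them could lead you to impose the wrong structural conditions on the H\"older splitting exponents $\mu_i$ and the interpolation parameters $h_{3i}, h_{4i}$.
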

\begin{proof}
For the consistency of notation we denote
\begin{equation}\label{325}
\lambda_1=p-m_1+2m_2-2, \lambda_2=2,
\end{equation}
and
\begin{equation}\label{326}
\kappa_1=2, \kappa_2=2(q-1).
\end{equation}
Let $\mu_{i}>1$ be an arbitrary real number and $\mu_i':=\frac{\mu_i}{\mu_i-1}$ be its conjugate, then we can apply H\"older's inequality to estimate $I_{3i}$ in (\ref{323}) by
\[I_{31} \leq \Big( \int_{\Omega}u^{(p-m_1+2m_2-2)\mu_1}\Big)^{\frac{1}{\mu_1}}\cdot \Big( \int_{\Omega}|\nabla v|^{2 \mu_1'}\Big)^{\frac{1}{\mu_1'}}
:= \Big( \int_{\Omega}u^{\lambda_1 \mu_1}\Big)^{\frac{1}{\mu_1}}\cdot \Big( \int_{\Omega}|\nabla v|^{\kappa_1 \mu_1'}\Big)^{\frac{1}{\mu_1'}}\]
and
\[I_{32}
\leq\Big( \int_{\Omega}u^{2\mu_2}\Big)^{\frac{1}{\mu_2}}\cdot \Big( \int_{\Omega}|\nabla v|^{2(q-1)\mu_2'}\Big)^{\frac{1}{\mu_2'}}
:= \Big( \int_{\Omega}u^{\lambda_2 \mu_2}\Big)^{\frac{1}{\mu_2}}\cdot \Big( \int_{\Omega}|\nabla v|^{\kappa_2 \mu_2'}\Big)^{\frac{1}{\mu_2'}},\]
which can be simplified as
\begin{equation}\label{327}
I_{3i}\leq \Big( \int_{\Omega}u^{\lambda_i \mu_i}\Big)^{\frac{1}{\mu_i}}\cdot \Big( \int_{\Omega}|\nabla v|^{\kappa_i \mu_i'}\Big)^{\frac{1}{\mu_i'}}, i=1,2.
\end{equation}

By Gagliardo--Nirenberg interpolation inequality, there exist positive constants $C_{318}$ and $C_{319}$ such that
\begin{align}\label{328}
&\Big(\int_{\Omega}u^{\lambda_i \mu_i}\Big)^{\frac{1}{\mu_i}}=\Big\| u^{\frac{p+m_1}{2}} \Big\|_{L^{\frac{2\lambda_i \mu_i}{p+m_1}}(\Omega)}^{\frac{2\lambda_i}{p+m_1}}     \nonumber\\
\leq& C_{318} \Big\| \nabla u^{\frac{p+m_1}{2}} \Big\|_{L^2(\Omega)}^{\frac{2\lambda_i}{p+m_1}\cdot h_{3i}}\cdot\Big\| u^{\frac{p+m_1}{2}} \Big\|_{L^{\frac{2}{p+m_1}}(\Omega)}^{\frac{2\lambda_i}{p+m_1}\cdot(1-h_{3i})} +C_{318}\Big\| u^{\frac{p+m_1}{2}} \Big\|_{L^{\frac{2}{p+m_1}}(\Omega)}^{\frac{2\lambda_i}{p+m_1}}     \nonumber\\
\leq& C_{319}\Big\| \nabla u^{\frac{p+m_1}{2}} \Big\|_{L^2(\Omega)}^{\frac{2\lambda_i}{p+m_1}\cdot h_{3i}}+C_{319}
\end{align}
with
\begin{align}\label{329}
h_{3i}:=\frac{\frac{p+m_1}{2}-\frac{p+m_1}{2\lambda_i \mu_i}}{\frac{p+m_1}{2}-(\frac{1}{2}-\frac{1}{N})}
\end{align}
and
\begin{align}\label{0330}
& \Big( \int_{\Omega}|\nabla v|^{2\mu_i'}\Big)^{\frac{1}{\mu_i'}} =\Big\| |\nabla v|^{q} \Big\|_{L^{\frac{\kappa_i \mu_i'}{q}}(\Omega)}^{\frac{\kappa_i}{q}}    \nonumber\\
\leq& C_{320}  \Big\| \nabla|\nabla v|^{q} \Big\|_{L^2(\Omega)}^{\frac{\kappa_i}{q}\cdot h_{4i}}\cdot\Big\| |\nabla v|^{q} \Big\|_{L^{\frac{s}{q}}(\Omega)}^{\frac{\kappa_i}{q}\cdot(1-h_{4i})} +C_{320}\Big\| |\nabla v|^{q} \Big\|_{L^{\frac{s}{q}}(\Omega)}^{\frac{\kappa_i}{q}}   \nonumber\\
\leq& C_{321}\Big\| \nabla|\nabla v|^{q} \Big\|_{L^2(\Omega)}^{\frac{\kappa_i}{q}\cdot h_{4i}}+C_{321}
\end{align}
with
\begin{align}\label{0331}
h_{4i}:=\frac{\frac{q}{s}-\frac{q}{\kappa_i \mu_i'}}{\frac{q}{s}-(\frac{1}{2}-\frac{1}{N})},
\end{align}
where we have used in (\ref{328}) and (\ref{0330}) the boundedness of $\Vert u\Vert_{L^1}$ and $\Vert \nabla v\Vert_{L^s}$ from (\ref{22}) and (\ref{24}) respectively.

We now claim that there always exist $\mu_i>1$, $i=1,2$ such that
\begin{equation}\label{0332}
\frac{2\lambda_i \mu_i}{p+m_1}\geq1, \frac{\kappa_i\mu'_i}{q}\geq1, 0<h_{3i}, h_{4i}<1
\end{equation}
and under condition (\ref{14})
\begin{align}\label{0333}
f_i(p,q,s):=\frac{2\lambda_i}{p+m_1}\cdot h_{3i}+\frac{\kappa_i}{q}\cdot h_{4i} =\frac{\lambda_i-\frac{1}{\mu_i}}{\frac{p+m_1}{2}-(\frac{1}{2}-\frac{1}{N})}
+\frac{\frac{\kappa_i}{s}-\frac{1}{\mu_i'}}{\frac{q}{s}-(\frac{1}{2}-\frac{1}{N})}<2.
\end{align}

On the other hand, we recall that if $\alpha+\beta<2$, then for any $\epsilon>0$ there exists $C_\epsilon>0$ such that $(x^\alpha+1)(y^\beta+1)\leq \epsilon(x^2+y^2)+C_\epsilon$ for any $x,y\in \mathbb R^+$.  Therefore, if conditions (\ref{0332}) and (\ref{0333}) hold, we can have
\begin{align}\label{0334}
I_{3i}\leq & \Big( \int_{\Omega} |\nabla u^{\frac{p+m_1}{2}}|^2 \Big)^{\frac{1}{2}\cdot \frac{2\lambda_i}{p+m_1} \cdot h_{3i}}
\cdot \Big( \int_{\Omega} \Big\| \nabla|\nabla v|^q \Big\|^2\Big)^{\frac{1}{2}\cdot \frac{\kappa_i}{q}\cdot (1-h_{3i})}+C_{322}
\nonumber\\
\leq &  \epsilon\int_{\Omega} |\nabla u^{\frac{p+m_1}{2}}|^2 +\epsilon\int_{\Omega}\Big| \nabla|\nabla v|^q \Big|^2+C_{322}.
\end{align}
Combining (\ref{323}) with (\ref{0334}), we conclude that
\begin{align}\label{0335}
\frac{d}{dt} \Big(\frac{1}{p}\int_{\Omega} u^p +\frac{1}{2q}\int_{\Omega} |\nabla v|^{2q} \Big)+\Big(\frac{1}{p}\int_{\Omega} u^p +\frac{1}{2q}\int_{\Omega} |\nabla v|^{2q}\Big)\leq C(p,q)
\end{align}
for all $t\in (0,\infty)$.  Then we can apply the Gr\"{o}nwall's lemma on (\ref{0335}) to show (\ref{324}).

Now in order to complete the proof of Lemma \ref{lemma31}, we only need to verify (\ref{0332}) and (\ref{0333}) claimed above in order to apply the Gagliardo--Nirenberg--Sobolev inequality.  First of all, we see that (\ref{0332}) is equivalent as
\[\frac{1}{2}-\frac{1}{N}<\frac{p+m_1}{2\lambda_i\mu_i}\leq 1 \text{~and~} \frac{1}{2}-\frac{1}{N}<\frac{q}{\kappa_i\mu_i'}\leq 1,\]
which, in view of (\ref{325}) and (\ref{326}), become
\begin{equation}\label{0336}
\frac{1}{2}-\frac{1}{N}<\frac{p+m_1}{2(p-m_1+2m_2-2)\mu_1}\leq 1, \quad \frac{1}{2}-\frac{1}{N}<\frac{q}{2\mu_1'}\leq 1
\end{equation}
and
\begin{equation}\label{0337}
\frac{1}{2}-\frac{1}{N}<\frac{p+m_1}{4\mu_2}\leq 1, \quad \frac{1}{2}-\frac{1}{N}<\frac{q}{2(q-1)\mu_2'}\leq 1.
\end{equation}
In the sequel we choose $\mu_1:=\mu_1(q)=\frac{q}{q-1}$ and $\mu_2:=\mu_2(p)=\frac{p}{2}$, and then it is easy to see that (\ref{0336}) and (\ref{0337}) hold for large $p$ and $q$.

Finally we are left to prove that $f_i(p,q,s)<2$ in (\ref{0333}) which, in light of (\ref{325}) and (\ref{326}), are
\[f_1(p,q,s)=\frac{p-m_1+2m_2-2-\frac{1}{\mu_1}}{\frac{p+m_1}{2}-(\frac{1}{2}-\frac{1}{N})}
+\frac{\frac{2}{s}-\frac{1}{\mu_1'}}{\frac{q}{s}-(\frac{1}{2}-\frac{1}{N})}<2\]
and
\[f_2(p, q,s)=\frac{2-\frac{1}{\mu_2}}{\frac{p+m_1}{2}-(\frac{1}{2}-\frac{1}{N})}
+\frac{\frac{2(q-1)}{s}-\frac{1}{\mu_2'}}{\frac{q}{s}-(\frac{1}{2}-\frac{1}{N})}<2.\]
By straightforward calculations we see that $f_1(p,q,s)<2$ and $f_2(p,q,s)<2$ are equivalent as
\begin{equation}\label{0338}
\frac{q}{s}>\zeta_1\Big(\frac{p+m_1}{2}-\Big(\frac{1}{2}-\frac{1}{N}\Big)\Big)+\Big(\frac{1}{2}-\frac{1}{N}\Big)
\end{equation}
and
\begin{equation}\label{0339}
\frac{q}{s}<\zeta_2\Big(\frac{p+m_1}{2}-\Big(\frac{1}{2}-\frac{1}{N}\Big)\Big)+\Big(\frac{1}{2}-\frac{1}{N}\Big),
\end{equation}
where \[\zeta_1=\zeta_1(p,q,s):=\frac{\frac{1}{s}-\frac{1}{2\mu'_1(p,q)}}{m_1-m_2+\frac{1}{2}+\frac{1}{N}+\frac{1}{2\mu_1(p,q)}}>0\]
and \[\zeta_2=\zeta_2(p,q,s):=\frac{\frac{1}{s}+\frac{1}{2\mu'_2(p,q)}-(\frac{1}{2}-\frac{1}{N}) }{1-\frac{1}{2\mu_2(p,q)}}>0.\]
We want to mention that the denominator in $\zeta_1$ is positive under condition (\ref{14}).

Note that we choose $\mu_1=\frac{q}{q-1}$ and $\mu_2=\frac{p}{2}$ and our discussions are divided into the followings:  case \emph{(i)}. $0<\alpha<1$ and therefore $s\in[1,\frac{N}{N-1})$.  Then
\begin{align*}
   &\zeta_2(\infty,\infty,N/(N-1))-\zeta_1(\infty,\infty,N/(N-1))\\
=&\Big(1-\frac{1}{N}+\frac{1}{2}-(\frac{1}{2}-\frac{1}{N})\Big)-\frac{1-\frac{1}{N}}{m_1-m_2+1+\frac{1}{N}} \\
  =& \frac{m_1-m_2+\frac{2}{N}}{m_1-m_2+1+\frac{1}{N}}>0.
\end{align*}
This implies that, by the continuity of $\zeta_i$, we can always choose $p^*$ and $q^*$ large and $s^*$ smaller than but sufficiently close to $\frac{N}{N-1}$ such that $\zeta_2(p^*,q^*,s^*)>\zeta_1(p^*,q^*,s^*)$, therefore both (\ref{0338}) and (\ref{0339}) hold for such $(p^*,q^*,s^*)$ hence $f_i(p^*,q^*,s^*)<2$.

case \emph{(ii)}.  $\alpha\geq1$ and therefore $s=2$.  Then
\[\zeta_2(\infty,\infty,2)-\zeta_1(\infty,\infty,2)=\frac{m_1-m_2+\frac{3N+2}{N(N+2)}}{(m_1-m_2+1+\frac{1}{N})(\frac{1}{2}+\frac{1}{N})}.\]
Similar as in case \emph{(i)} we have that $\zeta_2(p,q,2)>\zeta_1(p,q,2)$ hence $f_i(p,q,s)<2$ when $p$, $q$ are large.  In both cases (\ref{0333}) holds for large $p$, $q$ under condition (\ref{14}) and this completes the proof of Lemma \ref{lemma31}.
\end{proof}

In the following lemma, we estimate $I_{31}$ and $I_{32}$ by using Young's inequality instead of H\"older's as in Lemma \ref{lemma31}.  We shall see that $\alpha$ plays an important role in \emph{a priori} estimates.
\begin{lemma}\label{lemma32}
Suppose that
\begin{align}\label{338}
 2m_2-m_1<
    \begin{cases}
      \max\{\alpha, m_1\}+\frac{2}{N}, &\text{~if~} 0<\alpha < 1,\\
      \max\{\alpha, m_1\}+\frac{4}{N+2}, & \text{~if~} \alpha\geq1,\\
    \end{cases}
\end{align}
then for large $p$ and $q$ there exists a constant $C(p,q)>0$ such that
\begin{equation}\label{339}
\int_{\Omega} u^p+\int_{\Omega} |\nabla v|^{2q}\leq C(p,q),\forall t\in(0,\infty).
\end{equation}
\end{lemma}
\begin{proof}
First of all, we invoke the Gagliardo--Nirenberg interpolation inequality
\begin{align}\label{340}
\int_{\Omega} u^{p+m_1}&=\Vert u^{\frac{p+m_1}{2}} \Vert_{L^{2}(\Omega)}^{2}     \nonumber\\
&\leq C_{323} \Vert \nabla u^{\frac{p+m_1}{2}} \Vert_{L^2(\Omega)}^{2 h_5}\cdot \Vert u^{\frac{p+m_1}{2}} \Vert_{L^{\frac{2}{p+m_1}}(\Omega)}^{2 (1-h_5)}+C_{323} \Vert u^{\frac{p+m_1}{2}} \Vert_{L^{\frac{2}{p+m_1}}(\Omega)}^{2}                \nonumber\\
&\leq C_{324} \Vert \nabla u^{\frac{p+m_1}{2}} \Vert_{L^2(\Omega)}^{2 h_5}+C_{324},
\end{align}
where we have applied the fact that $\Vert u\Vert_{L^1}$ is bounded and
\[h_5: =\frac{\frac{p+m_1}{2}-\frac{1}{2}}{\frac{p+m_1}{2}-(\frac{1}{2}-\frac{1}{N})}\in(0,1).\]

By Young's inequality, there exists a positive constant $C_{323}$ such that in (\ref{323})
\begin{align}\label{341}
\frac{\chi^2M_2^2(p-1)}{2M_1}I_{31}&\leq \frac{b_1}{4} \int_{\Omega} (u^{p-m_1+2m_2-2})^{\frac{p+\max\{\alpha, m_1\} }{p-m_1+2m_2-2 }} + C_{325} \int_{\Omega}\vert \nabla v \vert ^{2\cdot \frac{p+\max\{\alpha, m_1\}}{\max\{\alpha, m_1\}+m_1-2m_2+2}} \nonumber\\
  &=\frac{b_1}{4}\int_{\Omega}u^{p+\max\{\alpha, m_1\}}+C_{325} \int_{\Omega}\vert \nabla v \vert ^{\theta_1}
\end{align}
and
\begin{align}\label{342}
C_{314}I_{32}&\leq \frac{b_1}{4} \int_{\Omega} (u^{2})^{\frac{p+\max\{\alpha, m_1\}}{2}} + C_{326} \int_{\Omega}\vert \nabla v \vert ^{2(q-1)\cdot \frac{p+\max\{\alpha, m_1\}}{p+\max\{\alpha, m_1\}-2}} \nonumber \\
  & =\frac{b_1}{4} \int_{\Omega} u^{p+\max\{\alpha, m_1\}} + C_{326} \int_{\Omega}\vert \nabla v \vert ^{\theta_2},
\end{align}
where we denote
\begin{equation}\label{343}
\theta_1: = \theta_1(p,q)=\frac{2(p+\max\{\alpha, m_1\})}{\max\{\alpha, m_1\}+m_1-2m_2+2}
\end{equation}
and
\begin{equation}\label{344}
\theta_2: = \theta_2(p,q)=\frac{2(q-1)(p+\max\{\alpha, m_1\})}{p+\max\{\alpha, m_1\}-2}.
\end{equation}
We want to mention that $\theta_i$ are well--defined since $\max\{\alpha, m_1\}>2m_2-m_1-2$ thanks to (\ref{338}).  Substituting (\ref{341})--(\ref{344}) into (\ref{323}), we derive
\begin{align}\label{345}
  &\frac{d}{dt}\Big( \frac{1}{p}\int_{\Omega} u^p + \frac{1}{2q}\int_{\Omega} |\nabla v|^{2q}        \Big)
   +\Big( \frac{1}{p}\int_{\Omega} u^p + \frac{1}{2q}\int_{\Omega} |\nabla v|^{2q}        \Big)
  +\frac{(q-1)D_2}{4q^2}\int_{\Omega}\Big| \nabla|\nabla v|^q \Big|^2 \nonumber\\
   \leq & C_{325} \int_{\Omega} |\nabla v|^{\theta_1}+C_{326} \int_{\Omega} |\nabla v|^{\theta_2}+C_{327}.
\end{align}
According to Gagliardo--Nirenberg interpolation inequality, we have for $i=1,2$
\begin{align*}
  \int_{\Omega} |\nabla v|^{\theta_i}= &\Big \| \vert \nabla v \vert^q \Big\|_{L^{\frac{\theta_i}{q}}(\Omega)}^{\frac{\theta_i}{q}} \\
  \leq&  C_{328} \Big \| \nabla \vert \nabla v \vert^q \Big\|_{L^{2}(\Omega)}^{\frac{\theta_i}{q}h_{6i}}\Big \| \vert \nabla v \vert^q \Big\|_{L^{\frac{s}{q}}(\Omega)}^{\frac{\theta_i}{q}(1-h_{6i})}+C_{328}\Big \| \vert \nabla v \vert^q \Big\|_{L^{\frac{s}{q}}(\Omega)}^{\frac{\theta_i}{q}}\\
  \leq & C_{329} \Big \| \nabla \vert \nabla v \vert^q \Big\|_{L^{2}(\Omega)}^{\frac{\theta_i}{q}h_{6i}}+C_{330},
\end{align*}
where we have applied the boundedness of $\Vert \nabla v \Vert_{L^s(\Omega)}$ and
\[h_{6i} := h_{6i}(p,q;s)=\frac{\frac{q}{s}-\frac{q}{\theta_i}}{\frac{q}{s}-(\frac{1}{2}-\frac{1}{N})}.\]

Denote
\[g_i(p,q;s):=\frac{\theta_i}{q}h_{6i}(p,q;s).\]
We claim that under condition (\ref{338}) there exists $p$ and $q$ large such that the followings hold
\begin{equation}\label{346}
 0<h_{6i}(p,q;s)<1\text{~and~}0<g_i(p,q;s)<2.
\end{equation}
Assuming (\ref{346}), we conclude from Gagliardo--Nirenberg interpolation inequality and the Young's inequality that for any $\epsilon>0$
\begin{equation}\label{347}
\int_{\Omega} |\nabla v|^{\theta_i}\leq \epsilon \Big \| \nabla \vert \nabla v \vert^q \Big\|_{L^{2}(\Omega)}^{2}+C_\epsilon.
\end{equation}
Substituting (\ref{347}) into (\ref{345}), we can easily derive that
\[y'(t)+y(t)\leq C_{325},\]
by setting $y(t):=\frac{1}{p}\int_{\Omega} u^p + \frac{1}{2q}\int_{\Omega} |\nabla v|^{2q}$ and solving this inequality by Gr\"{o}nwall's lemma gives rise to (\ref{339}).

Now we need to verify the inequalities in (\ref{346}), which by straightforward calculations, are equivalent as
\[\theta_i>s, q>\frac{\theta_i}{2}-\frac{s}{N}.\]
It is easy to see that $\theta_i>s$ hold since both $p$ and $q$ chosen to be large, and therefore we shall only need to verify that $q>\frac{\theta_i}{2}-\frac{s}{N}$ in the sequel.  We divide our discussions into the following two cases: case \emph{(i)}.  $0<\alpha<1$ and therefore $s\in[1,\frac{N}{N-1})$.  Then we can solve the inequalities $q>\frac{\theta_i}{2}-\frac{s}{N}$ for $i=1,2$ to see that
\begin{equation}\label{348}
q_1(p)<q<q_2(p),
\end{equation}
with
\[q_1(p)=\frac{p+\max\{\alpha, m_1\}}{\max\{\alpha, m_1\}+m_1-2m_2+2}-\frac{s}{N}\]
and
\[q_2(p)=\frac{(N+s)(p+\max\{\alpha, m_1\})}{2N}-\frac{s}{N}.\]
Since we shall choose both $p$ and $q$ to be large, we see that (\ref{348}) holds for some $s\in [1,\frac{N}{N-1})$ as long as $\frac{1}{\max\{\alpha, m_1\}+m_1-2m_2+2}<\frac{N+s}{2N}$, or equivalently the condition $2m_2-m_1<\max\{\alpha, m_1\}+\frac{2}{N}$ in (\ref{338}) holds.

case \emph{(ii)}.  $\alpha\geq1$ and therefore $s=2$.  The arguments are the same as in case (i) except that now the condition $q_1(p)<q<q_2(p)$, which implies that $\frac{1}{\max\{\alpha, m_1\}+m_1-2m_2+2}<\frac{N+2}{2N}$, holds provided that $2m_2-m_1<\max\{\alpha, m_1\}+\frac{4}{N+2}$.

Therefore we have verified (\ref{346}) for $p$ and $q$ large under (\ref{338}) and the proof of Lemma \ref{lemma32} completes.
\end{proof}

\subsection{Global existence and boundedness}

\begin{proof}[Proof\nopunct] \emph{of Theorem} \ref{theorem11}.
Taking some $p>N$ fixed, we have from Lemma \ref{lemma22} and \ref{lemma31} that $\Vert v(\cdot,t)\Vert_{W^{1,\infty}}$ is uniformly bounded.  Then one can apply the standard Moser--Alikakos $L^p$ iteration \cite{A0} or the user--friendly version in Lemma A.1 of \cite{TW2} to (\ref{11}) to establish the uniform boundedness of $\Vert u(\cdot,t)\Vert_{L^{\infty}}$.  Therefore the local solution $(u,v)$ is global thanks to the extension criterion in Proposition \ref{proposition21}.  Finally, we can apply the standard parabolic regularity theory to show that $(u,v)$ has the regularity in the theorem.
\end{proof}
\begin{proof}[Proof\nopunct] \emph{of Theorem} \ref{theorem12}.
The proof is the same as that of Theorem \ref{theorem11} in view of Lemma \ref{lemma32}.
\end{proof}

\section{Parabolic--elliptic system in multi--dimensional domain}\label{section4}
In this section, we study the global solutions of parabolic--elliptic system of (\ref{11}).  This describes a competition relationship in which $v$ diffuses much faster than $u$.  We shall prove the global existence and boundedness of the classical solutions of the system.
\subsection{Parabolic--elliptic system with repulsion}
First of all, we consider the parabolic--elliptic system of (\ref{11}) of the following form
\begin{equation}\label{41}
\left\{
\begin{array}{ll}
u_t=\nabla \cdot (D_1(u) \nabla u+\chi \phi(u) \nabla v)+(a_1-b_1u^{\alpha}-c_1v)u,&x \in \Omega,t>0, \\
0=D_2\Delta v+(a_2-b_2u-c_2v)v,&x \in \Omega,t>0, \\
\frac{\partial u}{\partial \textbf{n}}=\frac{\partial v}{\partial \textbf{n}}=0,&x\in\partial \Omega,t>0,\\
u(x,0)=u_0(x)\geq 0, &x\in \Omega.
\end{array}
\right.
\end{equation}
Our first result concerning (\ref{41}) is the following Theorem.
\begin{theorem}\label{theorem41}
Let $\Omega$ be a bounded domain in $\mathbb{R}^N$, $N\geq2$.  Assume that the smooth functions $D_1(u)$ and $\phi(u)$ satisfy (\ref{12}) and (\ref{13}) respectively with
\begin{equation}\label{42}
2m_2-m_1<\max\{\alpha,m_1\}+1.
\end{equation}
Suppose that $u_0\geq,\not \equiv0$ in $\Omega$ and $u_0\in C^2(\Omega)\cap W^{k,p}(\Omega)$ for some $k>1$ and $p>N$.  Then (\ref{41}) admits a unique positive classical solution $(u,v)$ which is uniformly bounded in $\Omega\times (0,\infty)$.
\end{theorem}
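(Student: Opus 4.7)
The plan is to follow the same template as the proof of Theorem \ref{theorem12} (specifically Lemma \ref{lemma32}), streamlined by the fact that the elliptic structure of the $v$-equation delivers uncoupled $W^{2,r}$ bounds on $v$ in terms of $u$. First, local existence of a unique nonnegative classical solution $(u,v)$ on some maximal interval $[0,T_{\max})$, together with the extension criterion $\limsup_{t\to T_{\max}^-}\|u(\cdot,t)\|_{L^\infty}=\infty$ if $T_{\max}<\infty$, follows from Amann's theory adapted to the parabolic--elliptic setting as in Proposition \ref{proposition21}. The elliptic maximum principle applied at a maximum point of $v$ immediately gives $0\le v(x,t)\le a_2/c_2$ throughout $\bar\Omega\times(0,T_{\max})$ regardless of the size of $u$, and integrating the $u$-equation and exploiting the superlinear absorber $-b_1u^{\alpha+1}$ as in (\ref{26}) yields $\|u(\cdot,t)\|_{L^1(\Omega)}\le C$ uniformly.

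The new tool, specific to the parabolic--elliptic case, is that $|(a_2-b_2u-c_2v)v|\le C(1+u)$ pointwise thanks to the $L^\infty$-bound on $v$, so standard Calder\'on--Zygmund estimates applied to $-D_2\Delta v=(a_2-b_2u-c_2v)v$ with homogeneous Neumann data yield
\[
\|v(\cdot,t)\|_{W^{2,r}(\Omega)}\le C\bigl(1+\|u(\cdot,t)\|_{L^r(\Omega)}\bigr),\qquad r\in(1,\infty).
\]
Hence $\|\nabla v\|_{L^s}$ (for any $s$ reachable by Sobolev embedding from $W^{2,r}$) is controlled directly by $\|u\|_{L^r}$, completely bypassing the coupled $\int|\nabla v|^{2q}$ estimates of Section \ref{section3}. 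This decoupling is the source of the milder condition (\ref{42}).

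The core step is then a uniform $L^p$ bound on $u$ for every $p$. Multiplying the $u$-equation by $u^{p-1}$ and integrating by parts exactly as in (\ref{31})--(\ref{35}) produces
\[
\tfrac{1}{p}\tfrac{d}{dt}\!\int_\Omega u^p+\tfrac{1}{p}\!\int_\Omega u^p+\tfrac{2M_1(p-1)}{(p+m_1)^2}\!\int_\Omega |\nabla u^{(p+m_1)/2}|^2+\tfrac{b_1}{2}\!\int_\Omega u^{p+\alpha}\le C\!\int_\Omega u^{A}|\nabla v|^2+C,
\]
with $A=p-m_1+2m_2-2$. For the cross term I would use the Young-inequality split of Lemma \ref{lemma32},
\[
\int_\Omega u^{A}|\nabla v|^2\le \tfrac{b_1}{4}\!\int_\Omega u^{p+k}+C\!\int_\Omega |\nabla v|^{\theta}, \qquad \theta=\tfrac{2(p+k)}{k+m_1-2m_2+2},\quad k=\max\{\alpha,m_1\},
\]
and observe that (\ref{42}) is exactly what keeps the denominator of $\theta$ strictly positive, so $\theta$ is a finite exponent comparable to $p$ for $p$ large. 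The $\int u^{p+k}$ piece is absorbed by $\int u^{p+\alpha}$ when $k=\alpha$ and by $\int|\nabla u^{(p+m_1)/2}|^2$ via Gagliardo--Nirenberg anchored on $\|u\|_{L^1}$ when $k=m_1$. The elliptic regularity estimate combined with the Sobolev embedding $W^{2,r}\hookrightarrow W^{1,\theta}$ and one interpolation of $\|u\|_{L^r}$ between $\|u\|_{L^1}$ and $\|u\|_{L^{p+k}}$ then controls $\int|\nabla v|^\theta$ in a form absorbable by the dissipation; Gr\"onwall's lemma produces $\|u(\cdot,t)\|_{L^p}\le C(p)$ uniformly in $t$.

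Once $\|u\|_{L^{p_\star}}$ is uniformly bounded for some $p_\star>N$, the elliptic regularity estimate upgrades to $\|\nabla v\|_{L^\infty}\le C$, and the Moser--Alikakos iteration applied to the $u$-equation, as at the end of Section \ref{section3}, produces $\|u\|_{L^\infty}\le C$ uniformly in time. The extension criterion then forces $T_{\max}=\infty$; standard parabolic regularity supplies the claimed smoothness, and uniqueness follows from a contraction argument on differences. The main obstacle is the exponent book-keeping in the cross-term estimate---one must check that the Young, Calder\'on--Zygmund, Sobolev, and interpolation powers combine so that the resulting inequality closes under (\ref{42}) rather than under the stronger (\ref{15}). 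The gain from the $+\tfrac{2}{N}$ (resp.~$+\tfrac{4}{N+2}$) of (\ref{15}) to $+1$ in (\ref{42}) reflects the improved control over $\nabla v$ available through full elliptic regularity rather than the partial parabolic dissipation used in Lemma \ref{lemma32}; no new ideas beyond those of Section \ref{section3} are required, but the verification is delicate.
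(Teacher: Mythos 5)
Your overall scheme (Young split of the cross term, elliptic regularity for $\nabla v$ in place of the parabolic $\int|\nabla|\nabla v|^q|^2$ dissipation, Moser--Alikakos to finish) is the same as the paper's, and the Calder\'on--Zygmund bound $\|v\|_{W^{2,r}}\le C(1+\|u\|_{L^r})$ is the right object. But the step where you convert $\int_\Omega|\nabla v|^\theta$ into a power of $u$ is not strong enough to close under (\ref{42}). Tracking your own bookkeeping: with $\theta=\frac{2(p+k)}{k+m_1-2m_2+2}$, $k=\max\{\alpha,m_1\}$, and the Sobolev embedding $W^{2,r}\hookrightarrow W^{1,\theta}$, the admissible exponent is $r=\frac{N\theta}{N+\theta}$, and interpolating $\|u\|_{L^r}$ between $L^1$ and $L^{p+k}$ yields $\|u\|_{L^r}^\theta\lesssim\|u\|_{L^{p+k}}^{\theta\sigma}$ with $\theta\sigma=\frac{\theta(1-\frac{1}{N})-1}{1-\frac{1}{p+k}}$. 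Requiring $\theta\sigma<p+k$ so that Young's inequality absorbs this into $\int u^{p+k}$ forces, in the limit $p\to\infty$, precisely $2m_2-m_1<k+\frac{2}{N}$ --- that is (\ref{15}) (for $0<\alpha<1$), not (\ref{42}). For $N>2$ this is strictly stronger than (\ref{42}), so the route you describe would prove less than Theorem~\ref{theorem41} claims.

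The missing ingredient is that the paper does not pass through $W^{2,r}\hookrightarrow W^{1,\theta}$ at all. It instead uses the Gagliardo--Nirenberg interpolation of $\nabla v$ \emph{between $\Delta v$ and $v$ in $L^\infty$},
\[
\|\nabla v\|_{L^\gamma(\Omega)}^\gamma\le C\|\Delta v\|_{L^{\gamma/2}(\Omega)}^{\gamma/2}\|v\|_{L^\infty(\Omega)}^{\gamma/2}+C\|v\|_{L^\infty(\Omega)}^\gamma,
\]
which is sharper than the Sobolev embedding precisely because it exploits the uniform bound $0\le v\le a_2/c_2$ from the elliptic maximum principle. Feeding $D_2\Delta v=-(a_2-b_2u-c_2v)v$ into this gives $\int_\Omega|\nabla v|^\gamma\le C\int_\Omega u^{\gamma/2}+C$ --- note the exponent $\gamma/2$, not $\gamma$: this halving of the power on $u$ is exactly the gain over the $W^{2,r}$--Sobolev route. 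With it, the requirement becomes $\theta/2<p+k$, i.e.\ $k+m_1-2m_2+2>1$, which is (\ref{42}). (The paper also organizes the Young split slightly differently, choosing $\gamma=2(p-m_1+2m_2-1)$ so that the Young-excess on $u$ and the elliptic term both collapse to $\int u^{p-m_1+2m_2-1}$, after which absorption into $-\xi\int u^{p+m_1}-b_1\int u^{p+\alpha}$ by Young's inequality on the bounded domain finishes the argument, with no recourse to the $L^1$ bound on $u$.) So the gap is concrete: replace the $W^{2,r}\hookrightarrow W^{1,\theta}$ embedding by the $L^\infty$-anchored Gagliardo--Nirenberg estimate for $\nabla v$ and the bookkeeping does close under (\ref{42}).
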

\begin{proof}
The proof is very similar as that of Theorem \ref{theorem11}.  First of all, the local existence in $\Omega\times (0,T_{\max})$ follows from the theory Amann in \cite{Am}.  One can easily apply maximum principle and Hopf's lemma to show that $u(x,t)\geq,\not\equiv 0$ in $\Omega\times(0,\infty)$ and $0<v(x)<\frac{a_2}{c_2}$ in $\Omega$.  Moreover, if we can show that $\Vert \nabla u(\cdot,t)\Vert_{L^p}$ is bounded for some $p>N$, then $\Vert \nabla v\Vert_{L^\infty}$ is also bounded after applying the Sobolev embedding $W^{1,p}(\Omega)\hookrightarrow L^\infty(\Omega)$ to the $v$--equation, and therefore one can apply the standard Moser--Alikakos $L^p$--iteration to establish the boundedness of $\Vert \nabla u\Vert_{L^\infty}$.  Finally the regularity of $(u,v)$ follows from parabolic and elliptic embedding theory.

Now we only need to prove the boundedness of $\int_\Omega u^p(\cdot,t)$ for some $p>N$.  Testing the $u$-equation in (\ref{41}) by $u^{p-1}$ and then integrating it over $\Omega$ by parts, we obtain
\begin{align}\label{43}
\frac{1}{p}\frac{d}{dt}\int_{\Omega} u^p=&-(p-1)\int_{\Omega} D_1(u)u^{p-2}|\nabla u|^2-(p-1)\int_{\Omega} \chi \phi(u)u^{p-2}\nabla u\nabla v \nonumber\\
&+\int_{\Omega} u^p(a_1-b_1u^\alpha-c_1v).
\end{align}
Similar as in (\ref{32}) and (\ref{342}) we invoke the Gagliardo--Nirenberg interpolation inequality to obtain
\begin{align}\label{44}
-(p-1)\int_{\Omega} D_1(u)u^{p-2}|\nabla u|^2 \leq -\frac{4M_1(p-1)}{(p+m_1)^2}\int_{\Omega} |\nabla u^{\frac{p+m_1}{2}}|^2+C_{41}
\leq -\xi \int_{\Omega} u^{p+m_1}+C_{42}(\xi),
\end{align}
and apply Young's inequality to have that for any $\gamma>2$
\begin{align}\label{45}
 &-(p-1)\int_{\Omega} \chi \phi(u)u^{p-2}\nabla u\nabla v\nonumber\\
 \leq & \epsilon \int_{\Omega} u^{p+m_1-2}|\nabla u|^2+ \epsilon \int_{\Omega} u^{\frac{(p-m_1+2m_2-2)}{2}\cdot\frac{2\gamma}{\gamma-2}} + C_{\epsilon}\int_{\Omega} |\nabla v|^{\gamma},
\end{align}
where in (\ref{44}) and (\ref{45}) $\xi>0$ and $\epsilon>0$ are arbitrary and $C_{41}$, $C_{42}$ are positive constants.

We invoke the Gagliardo--Nirenberg interpolation inequality and the boundedness of $v$ to obtain
\begin{align}\label{46}
\int_{\Omega}|\nabla v|^{\gamma}=\| \nabla v \|^{\gamma}_{L^{\gamma}(\Omega)}
\leq& C_{43} \| \Delta v \|^{\frac{\gamma}{2}}_{L^{\frac{\gamma}{2}}(\Omega)} \nonumber
\cdot \|  v \|^{\frac{\gamma}{2}}_{L^{\infty}(\Omega)}+C_{44}\| v \|^{\gamma}_{L^{\infty}(\Omega)}\\
\leq& C_{45} \| \Delta v \|^{\frac{\gamma}{2}}_{L^{\frac{\gamma}{2}}(\Omega)}+C_{46},
\end{align}
where $C_{4i}$ are positive constants.

On the other hand, in light of $D_2 \Delta v=-(a_2-b_2 u-c_2 v)v$ and the boundedness of $v$, we have
\begin{align}\label{47}
  \int_{\Omega}|\nabla v|^{\gamma}=\| \nabla v \|^{\gamma}_{L^{\gamma}(\Omega)}
&\leq C_{47} \| u \|^{\frac{\gamma}{2}}_{L^{\frac{\gamma}{2}}(\Omega)}+C_{46}  = C_{47} \int_{\Omega}u^{\frac{\gamma}{2}}+C_{46}.
\end{align}
Thanks to (\ref{44}), (\ref{45}) and (\ref{47}), we derive from (\ref{43})
\begin{align}\label{48}
  \frac{1}{p}\frac{d}{dt}\int_{\Omega} u^p\leq -\xi \int_{\Omega} u^{p+m_1}-b_1 \int_{\Omega} u^{p+\alpha}+\varepsilon \int_{\Omega} u^{\frac{(p-m_1+2m_2-2)\gamma}{\gamma-2}} +C_{\varepsilon}  \int_{\Omega} u^{\frac{\gamma}{2}} +C_{47}.
\end{align}

Choosing $\gamma=2(p-m_1+2m_2-1)$ with $\frac{\gamma}{2}=\frac{(p-m_1+2m_2-2)\gamma}{\gamma-2}$, we infer from (\ref{48})
\begin{align}\label{49}
  \frac{1}{p}\frac{d}{dt}\int_{\Omega} u^p
  \leq& -\xi \int_{\Omega} u^{p+m_1}-b_1 \int_{\Omega} u^{p+\alpha}+(\varepsilon+C_{\varepsilon})  \int_{\Omega} u^{p-m_1+2m_2-1} +C_{47}\nonumber\\
  \leq &- \int_{\Omega} u^p+C_{48},
\end{align}
where the second inequality follows from (\ref{42}).  Solving (\ref{49}) implies that $\Vert u(\cdot,t)\Vert_{L^p}$ is uniformly bounded in time for each $p\geq2$ and this completes the proof.
\end{proof}

\begin{remark}\label{remarkl}
If $m_1 \geq \alpha$, then Theorem \ref{theorem41} also holds if (\ref{42}) is relaxed to $2m_2-m_1\leq\max\{\alpha,m_1\}+1$ or equivalently $2m_2-2m_1\leq 1$.  Indeed, in this case we can choose $\xi>2(\varepsilon +C(\varepsilon))$ and therefore (\ref{49}) implies that
\[ \frac{1}{p}\frac{d}{dt}\int_{\Omega} u^p
  \leq-\frac{\xi}{2} \int_{\Omega} u^{p+m_1}+C_{48},\]
from which the boundedness of $\int_\Omega u^p(x,t)$ follows.  Similarly one can show that Theorem \ref{theorem41} holds for $2m_2-m_1\leq\max\{\alpha,m_1\}+1$ when $m_1<\alpha$ and $b_1$ is large.
\end{remark}
By a different approach we prove the following results.
\begin{theorem}\label{theorem42}
Let $\Omega$ be a bounded domain in $\mathbb{R}^N$, $N\geq1$, with piecewise smooth boundary.  Suppose that
\begin{align}\label{410}
m_2<\max\{\alpha,m_1\}
\end{align}
then the nonnegative solution $(u,v)$ to (\ref{41}) is classical, global and bounded in $\Omega\times (0,\infty)$.
\end{theorem}

\begin{proof}
We begin with (\ref{43}) and estimate the second term differently.  For each $p>2$ we denote
\[\Phi_p(u)=\int_0^u \phi(s)s^{p-2}ds,\]
then thanks to $\phi(s)\leq M_2s^{m_2}$ we have
\[\Phi_p(u)\leq M_2\int_0^u s^{p+m_2-2}ds=\frac{M_2}{p+m_2-1}u^{p+m_2-1}.\]
Therefore we have from the integration by parts and the second equation in (\ref{41})
\begin{align}\label{411}
  & -(p-1)\int_{\Omega} \chi \phi(u)u^{p-2}\nabla u\nabla v    \nonumber\\
 =& -(p-1) \chi\int_{\Omega} \nabla \Phi_p(u)\nabla v= (p-1) \chi\int_{\Omega}   \Phi_p(u)\Delta v  \nonumber\\
 =&-(p-1) \chi\int_{\Omega}   \Phi_p(u)(a_2-b_2u-c_2v)v \nonumber\\
 =&b_2(p-1) \chi\int_{\Omega}   \Phi_p(u) uv+(p-1) \chi\int_{\Omega} \Phi_p(u)(c_2v-a_2)v \nonumber\\
\leq&\frac{b_2(p-1)\chi M_2\Vert v \Vert_{L^\infty}}{p+m_2-1} \int_{\Omega} u^{p+m_2}+\frac{b_2(p-1)\chi M_2\Vert (c_2v-a_2)v  \Vert_{L^\infty}}{p+m_2-1}\int_{\Omega}  u^{p+m_2-1} \nonumber\\
\leq & C_{410} \int_{\Omega} u^{p+m_2}+C_{411},
 \end{align}
where $C_{410}$ and $C_{411}$ are positive constants.

Collecting (\ref{44}) and (\ref{411}) we have from $m_2<\max\{ \alpha,m_1\}$ that
\begin{align}\label{412}
  \frac{1}{p}\frac{d}{dt}\int_{\Omega} u^p\leq& -\xi \int_{\Omega} u^{p+m_1}-b_1 \int_{\Omega} u^{p+\alpha}+C_{410} \int_{\Omega} u^{p+m_2}+C_{412}\nonumber\\
  \leq &-\int_\Omega u^p+C_{413},
\end{align}
and this implies the boundedness of $\int_\Omega u^p$ for each $p>2$.  The rest of the proof is the same as that of Theorem \ref{theorem41}
\end{proof}

\begin{remark}\label{remark2}
Similar as Remark \ref{remarkl}, one can show that (\ref{410}) can be relaxed to $m_2\leq\max\{\alpha,m_1\}$ if $m_1>\alpha_1$ or $b_1$ is large.
\end{remark}

\subsection{Parabolic--elliptic system with attraction}

Finally, we establish the global existence and boundedness of the following parabolic--elliptic system with attraction
\begin{equation}\label{415}
\left\{
\begin{array}{ll}
u_t=\nabla \cdot (D_1(u) \nabla u-\chi \phi(u) \nabla v)+(a_1-b_1u^{\alpha}-c_1v)u,&x \in \Omega,t>0, \\
0=D_2\Delta v+(a_2-b_2u-c_2v)v,&x \in \Omega,t>0, \\
\frac{\partial u}{\partial \textbf{n}}=\frac{\partial v}{\partial \textbf{n}}=0,&x\in\partial \Omega,t>0,\\
u(x,0)=u_0(x)\geq 0, &x\in \Omega.
\end{array}
\right.
\end{equation}
Here we assume that $D_1(u)\geq M_1(1+u)^{m_1}$ as in (\ref{11}) and in contrast to (\ref{13}) changes to $\phi(u)\geq M_2u^{m_2}$.  We prove global existence and boundedness for (\ref{415}) for any $m_i\in\mathbb R^+$ and $\alpha\in\mathbb R$.  The last main result of this paper is the following theorem.
\begin{theorem}\label{theorem43}
Let $\Omega$ be a bounded domain in $\mathbb{R}^N$, $N\geq1$, with piecewise smooth boundary.  Suppose that for some positive constants $M_i>0$, $D_1(u)\geq M_1(1+u)^{m_1}$ and $\phi(u)\geq M_2u^{m_2}$, with $\max\{m_1,m_2,\alpha\}\geq 0$.  Then the nonnegative solution $(u,v)$ to (\ref{415}) is classical and uniformly bounded in $\Omega\times (0,\infty)$.
\end{theorem}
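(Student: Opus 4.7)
The plan is to parallel the argument of Theorems \ref{theorem41}--\ref{theorem42}: it suffices to establish a uniform bound on $\Vert u(\cdot,t)\Vert_{L^p(\Omega)}$ for some $p>N$, since this yields $\Vert\nabla v\Vert_{L^\infty}\leq C$ via elliptic $W^{2,p}$ regularity applied to the $v$--equation, and a standard Moser--Alikakos iteration then produces $\Vert u\Vert_{L^\infty}\leq C$. Local existence and positivity of $(u,v)$ follow from Amann's theory, the elliptic maximum principle applied to $0=D_2\Delta v+(a_2-b_2u-c_2v)v$ yields $0<v\leq a_2/c_2$, and an $L^1$ bound for $u$ is obtained as in Lemma \ref{lemma22}.

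The key $L^p$ estimate exploits the reversed sign in the advection. Testing the $u$--equation by $u^{p-1}$ and integrating by parts, the cross--diffusion term becomes
\[ (p-1)\chi\int_\Omega \phi(u)u^{p-2}\nabla u\cdot\nabla v = -(p-1)\chi\int_\Omega \Phi_p(u)\Delta v, \]
where $\Phi_p(u):=\int_0^u s^{p-2}\phi(s)\,ds\geq 0$ is finite on any bounded interval of $u$ by the $C^2$ smoothness of $\phi$. Substituting $\Delta v=-(a_2-b_2u-c_2v)v/D_2$ from the elliptic equation, this rewrites as
\[ \frac{(p-1)\chi a_2}{D_2}\int_\Omega\Phi_p(u)v-\frac{(p-1)\chi b_2}{D_2}\int_\Omega\Phi_p(u)uv-\frac{(p-1)\chi c_2}{D_2}\int_\Omega\Phi_p(u)v^2, \]
and the last two summands are \emph{non--positive}, acting as absorption --- this is precisely where the attractive sign is decisive.

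The hardest part is controlling the sole positive contribution $\frac{(p-1)\chi a_2}{D_2}\int_\Omega\Phi_p(u)v$ despite having only a lower bound on $\phi$. I would handle it by a threshold argument: for any $L>0$, splitting $\Omega=\{u\leq L\}\cup\{u>L\}$ and using $v\leq uv/L$ on $\{u>L\}$ together with $\Phi_p(u)\leq\Phi_p(L)<\infty$ on $\{u\leq L\}$ gives
\[ \int_\Omega\Phi_p(u)v\leq \frac{1}{L}\int_\Omega \Phi_p(u)uv+\Phi_p(L)\Vert v\Vert_{L^\infty}|\Omega|. \]
Choosing $L=2a_2/b_2$ absorbs half of the good term $-\frac{(p-1)\chi b_2}{D_2}\int \Phi_p(u)uv$. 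Combined with the logistic dissipation $-b_1\int u^{p+\alpha}$, which dominates $a_1\int u^p$ via Young's inequality thanks to $\alpha>0$, and discarding the remaining non--positive contributions, I arrive at
\[ \frac{1}{p}\frac{d}{dt}\int_\Omega u^p+\frac{1}{p}\int_\Omega u^p\leq C(p),\quad\forall t\in(0,T_{\max}), \]
from which Gr\"onwall's lemma delivers the desired uniform $L^p$ bound.

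The main obstacle is thus the absence of an upper bound on $\phi$, and the threshold argument is what bypasses it using only the pointwise sign of the absorbed terms and the local smoothness of $\phi$. This explains why no growth restriction on the exponents is needed under attraction: the condition $\max\{m_1,m_2,\alpha\}\geq 0$ is automatic given the standing positivity of these parameters and does not enter the argument beyond $\alpha>0$. Once the $L^p$ bound is secured, elliptic regularity for $v$, Moser--Alikakos iteration for $u$, and standard parabolic/elliptic theory complete the proof.
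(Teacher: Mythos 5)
Your proof is correct, and it takes a genuinely different route from the paper at the decisive step of estimating the advection term after substituting $\Delta v$ from the elliptic equation. Both you and the paper arrive at the decomposition
\[
\frac{(p-1)\chi}{D_2}\Big[a_2\int_\Omega\Phi_p(u)v - b_2\int_\Omega\Phi_p(u)uv - c_2\int_\Omega\Phi_p(u)v^2\Big],
\]
but the paper then asserts in (\ref{417}), via the lower bound $\tilde\Phi_p(u)\geq\frac{M_2}{p+m_2-1}u^{p+m_2-1}$, that this whole expression is $\leq -C_{412}\int_\Omega u^{p+m_2}+C_{413}$. However, (\ref{417}) contains a sign slip (the factor $(c_2v-a_2)$ should read $(a_2-c_2v)$), and with the corrected sign the summand $\frac{(p-1)\chi a_2}{D_2}\int_\Omega\tilde\Phi_p(u)v$ is a genuinely \emph{positive} contribution that the lower bound on $\tilde\Phi_p$ alone does not control --- one would additionally need a uniform positive lower bound on $v$, which the parabolic--elliptic system does not supply since $v$ may be small exactly where $u$ is large. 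Your threshold splitting of $\int_\Omega\Phi_p(u)v$ at $u=L$ with $L=2a_2/b_2$, absorbing the $\{u>L\}$ part into half of the good term $-\frac{(p-1)\chi b_2}{D_2}\int_\Omega\Phi_p(u)uv$ and bounding the $\{u\leq L\}$ part by the local finiteness of $\Phi_p$, is precisely the needed fix. It yields the weaker conclusion that the advection term is $\leq C(p)$ rather than $\leq -C\int u^{p+m_2}+C$, but this suffices because the logistic term already provides the dissipation $-b_1\int_\Omega u^{p+\alpha}$ with $\alpha>0$ (a standing assumption of Proposition \ref{proposition21}), and alternatively one can take $\xi$ large in (\ref{44}). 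Thus your argument is self--contained and in fact patches a gap in the paper's; the only trade--off is forgoing the putative $m_2$-dissipation, which is immaterial under the paper's hypotheses $m_1,m_2,\alpha>0$.
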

\begin{proof}
By the same arguments for (\ref{43}) we test the $u$-equation in (\ref{415}) by $u^{p-1}$ and integrate it over $\Omega$ by parts to obtain
\begin{align}\label{416}
\frac{1}{p}\frac{d}{dt}\int_{\Omega} u^p=&-(p-1)\int_{\Omega} D_1(u)u^{p-2}|\nabla u|^2+(p-1)\int_{\Omega} \chi \phi(u)u^{p-2}\nabla u\nabla v \nonumber\\
&+\int_{\Omega} u^p(a_1-b_1u^\alpha-c_1v).
\end{align}
Similar as above, we denote
\[\tilde \Phi_p(u):=\int_0^u \phi(s)s^{p-2}ds.\]
Then we can easily show that $\tilde \Phi_p(u)\geq \frac{M_2}{p+m_2-1}u^{p+m_2-1}$ and we can derive as in (\ref{411}) that
\begin{align}\label{417}
  & (p-1)\int_{\Omega}\chi  \phi(u)u^{p-2}\nabla u\nabla v    \nonumber\\
= & -b_2(p-1) \chi\int_{\Omega}\tilde\Phi_p(u) uv+(p-1) \chi\int_{\Omega} \tilde\Phi_p(u)(c_2v-a_2)v \nonumber\\
\leq & -C_{414}\int_{\Omega}  u^{p+m_2}+C_{415},
\end{align}
where $C_{414}$ and $C_{415}$ are positive constants.  Collecting (\ref{417}) and (\ref{44}), we have from (\ref{416})
\begin{align}\label{418}
\frac{1}{p}\frac{d}{dt}\int_{\Omega} u^p
\leq&-\xi \int_\Omega u^{p+m_1}-b_1\int_{\Omega} u^{p+\alpha}-C_{415} \int_{\Omega} u^{p+m_2} +C_{415}\nonumber\\
\leq&-  \int_\Omega u^p+C_{416},
\end{align}
where the last inequality follows from Young's inequality and the assumption that $\max\{m_1,m_2,\alpha\}\geq0$.  Solving (\ref{418}) gives rise to the boundedness of $\int_\Omega u^p$ for any $p>2$ hence the global existence and boundedness follow.
\end{proof}
According to Theorem \ref{theorem43}, only one of $m_1$, $m_2$ and $\alpha$ is needs to be nonnative to guarantee the global existence and boundedness of (\ref{415})in contrast to Theorem \ref{theorem41} and Theorem \ref{theorem42}.  Apparently, this is due to the effect of population attraction.   It is necessary to point out that for chemotaxis model, it is well known that chemo--attraction destabilizes the system and supports the occurrence of blowups, while chemo--repulsion tends to prevent blowups.  However, for Lotka--Volterra competition models, attraction prevents blowups while repulsion, though it is not completely understood, tends to support blowups.  See \cite{LeN2,WGY} for instance.  We surmise that the same conclusions hold true for the fully parabolic system (\ref{11}), though a completely different approach is needed for this purpose.

\begin{remark}\label{remark3}
Consider the following system
\begin{equation}\label{419}
\left\{
\begin{array}{ll}
u_t=\nabla \cdot (D_1(u) \nabla u+\chi \phi(u) \nabla v)+(a_1-b_1u-c_1v)u,&x \in \Omega,t>0, \\
v_t=D_2\Delta v+(a_2-b_2u-c_2v)v,&x \in \Omega,t>0, \\
\frac{\partial u}{\partial \textbf{n}}=\frac{\partial v}{\partial \textbf{n}}=0,&x\in\partial \Omega,t>0,\\
u(x,0)=u_0(x)\geq 0, &x\in \Omega.
\end{array}
\right.
\end{equation}
When $\chi>0$, it is known \cite{WGY,WZ} that (\ref{419}) admits nontrivial spatial patterns when $\chi$ is large.  It is interesting to investigate the attraction model with $\chi<0$.
\end{remark}

\end{document}